\definecolor{DarkBlue}{rgb}{0.15,0.15,0.55}
\DeclareSymbolFont{cyrletters}{OT2}{wncyr}{m}{n}
\DeclareMathSymbol{\Sha}{\mathalpha}{cyrletters}{"58}
\newcommand{\R}{\mathbb{R}}
\newcommand{\Z}{\mathbb{Z}}
\newcommand{\T}{\mathbb{T}}
\newcommand{\N}{\mathbb{N}}
\newcommand{\Lop}{\mathrm{L}}
\newcommand{\SpT}{\mathcal{S}'(\mathbb{T})}
\newcommand{\ST}{\mathcal{S}(\mathbb{T})}
\newtheorem{proposition}{Proposition}[section]
\newtheorem{theorem}[proposition]{Theorem}
\newtheorem{corollary}[proposition]{Corollary}
\newtheorem{definition}[proposition]{Definition}
\titleformat*{\paragraph}{\itshape\mdseries}
\title{The Critical  Smoothness  of Generalized Functions}
\author{Julien Fageot and John Paul Ward}
\begin{document}

\maketitle

\begin{abstract}
    For each integrability parameter $p \in (0,\infty]$, the critical  smoothness of a periodic  generalized function $f$, denoted by $s_f(p)$ is the supremum over the smoothness parameters $s$ for which $f$ belongs to the Besov space $B_{p,p}^s(\mathbb{T})$ (or other similar function spaces).
    This paper investigates the evolution of the critical smoothness with respect to the integrability parameter $p$. Our main result is a simple characterization of all the possible \emph{critical smoothness functions} $p \mapsto s_f(p)$ when $f$ describes the space of generalized periodic functions.
    We moreover characterize the compressibility of generalized periodic functions in wavelet bases from the knowledge of their critical smoothness function.
\end{abstract}

\section{Generalized Functions and their Critical Smoothness} 

    In functional analysis, continuous-domain (generalized) functions are classified with respect to their regularity properties. 
    The latter can be measured in different smoothness classes such as Sobolev, Hölder, or Besov spaces, to name a few. The regularity of a class of functions deeply influences how well a given function can be approximated in adequate bases (\emph{e.g.}, Fourier or wavelets). 
    Our goal in this note is to use Besov spaces to characterize the smoothness properties of functions. 
    We will introduce the \emph{critical smoothness function} of a generalized function (see Definition \ref{def:critical}) and highlight its main properties. 
    
    \textit{Preliminary remark.} This paper will be centered on \emph{periodic} functions, in order to focus on the smoothness, which is a local property. 
    The periodic framework is only considered for its convenience (it excludes questions regarding the asymptotic decay/growth properties of functions). 
    
    \subsection{The Critical Smoothness}
    \label{sec:defspf}
        
    Let $\T = \R / \Z = [0,1]$ be the $1$-dimensional torus, where the extremeties $0$ and $1$ are identified.
    The space of periodic generalized function $\SpT$ is the topological dual of the space of periodic and infinitely smooth functions $\ST$ endowed with its usual Fréchet topology~\cite{Treves1967}. 
    Besov spaces will be formally defined in Section \ref{sec:prelim}. For the moment, it suffices to recall that they are subspaces $B_{p,q}^s(\T)$ of $\SpT$ such that, roughly speaking,
    $f \in B_{p,q}^s(\T)$  means that $f$ has $s$ derivative in $L^p(\T)$, 
    at least for $s \in \N$.
    We call $p \in (0,\infty]$ the integrability parameter and $s\in \R$ the smoothness parameter. The last parameter $q \in (0,\infty]$ plays a secondary role (we shall mostly consider the cases $q=p$ hereafter). 
    Besov spaces are Banach spaces (quasi-Banach spaces, respectively) for any $p,q \geq 1$ (for $0<p<1$ or $0<q<1$, respectively)~\cite[Theorem 1, Section 3.5.1]{schmeisser87}.  \\
    
    For any fixed $0< p , q \leq \infty$, we have the projective and inductive limits
    \begin{equation} \label{eq:SSpviaB}
    \ST = \bigcap_{s\in\R}  B_{p,q}^s(\T) \quad \text{ and } \quad
   \SpT = \bigcup_{s\in\R}  B_{p,q}^s(\T),
     \end{equation}
    as proved for instance in~\cite{Kabanava2008tempered}\footnote{Kabanava considers the case of tempered generalized functions but the result easily applies to the periodic setting.}. Moreover, we have the topological embedding (see Proposition \ref{prop:embeddings})
    \begin{equation} \label{eq:embedsmooth}
        B_{p,q}^{s_2}(\T) \subset B_{p,q}^{s_1}(\T)
    \end{equation}
    for any $s_1 \leq s_2$. These two facts lead us to the following definition.
    
    \begin{definition} \label{def:critical}
    Let $f \in \SpT$ and $0 < p,q \leq \infty$. Then, the \emph{$(p,q)$-critical smoothness} of $f$ is defined by
    \begin{equation} \label{eq:cs}
        s_f(p,q) = \sup \{ s \in \R, \ f \in B_{p,q}^s(\T) \} \in (-\infty, \infty].
    \end{equation}
    \end{definition}
     
     Note that $s_f(p,q)$ is well-defined, since the right relation \eqref{eq:SSpviaB} implies that the supremum in \eqref{eq:cs} is taken over a non-empty set. Moreover, the embeddings \eqref{eq:embedsmooth} implies that $f \in B_{p,q}^{s}(\T)$ for any $s < s_f(p,q)$ and that $f \notin B_{p,q}^{s}(\T)$ for any $s > s_f(p,q)$ (a generalized function $f$ can belong to the critical space $B_{p,q}^{s_f(p,q)}(\T)$ or not). 
    The parameter $q$ only plays a secondary role for embedding properties (see Proposition \ref{prop:embeddings} for a precise meaning). A first simple consequence is the following result. 
    
    \begin{proposition} \label{prop:removeq}
     Let $f \in \SpT$ and $0 < p \leq \infty$. Then, for any $0 < q_1 , q_2 \leq \infty$, we have that
     \begin{equation}
         s_f(p,q_1) = s_f(p,q_2).
     \end{equation}
    \end{proposition}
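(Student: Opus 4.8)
The plan is to reduce the equality of critical smoothnesses to a single comparison of suprema, powered by the $q$-independent embedding announced in the text and made precise in Proposition~\ref{prop:embeddings}. Concretely, I would use the embedding that a strict gain in smoothness absorbs any change in the fine index: for the fixed $p$ and any $0 < r_1, r_2 \leq \infty$,
\begin{equation}
    B_{p,r_2}^{s_2}(\T) \subset B_{p,r_1}^{s_1}(\T) \quad \text{whenever } s_1 < s_2 .
\end{equation}
Granting this, the statement becomes a short order-theoretic argument about the two sets $A = \{s \in \R : f \in B_{p,q_1}^s(\T)\}$ and $B = \{s \in \R : f \in B_{p,q_2}^s(\T)\}$, whose suprema are $s_f(p,q_1)$ and $s_f(p,q_2)$ respectively.

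First I would show $\sup B \geq \sup A$. Pick any $s \in A$, so that $f \in B_{p,q_1}^s(\T)$. For every $s' < s$, the embedding above (with $r_2 = q_1$, $r_1 = q_2$, and smoothness $s' < s$) gives $f \in B_{p,q_2}^{s'}(\T)$, hence $s' \in B$ and $\sup B \geq s'$. Letting $s' \uparrow s$ yields $\sup B \geq s$, and since $s \in A$ was arbitrary, $\sup B \geq \sup A$. Swapping the roles of $q_1$ and $q_2$ gives the reverse inequality, so $s_f(p,q_1) = s_f(p,q_2)$. The limiting value $+\infty$ needs no separate treatment: if $A$ is unbounded above then the same chain of inequalities forces $\sup B = +\infty$ as well, while the non-emptiness of $A$ (hence the finiteness from below of the supremum) is guaranteed by the inductive-limit relation in~\eqref{eq:SSpviaB}.

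I do not expect a genuine obstacle here, since the entire content is carried by the cited embedding; the only point requiring care is to invoke it in the correct direction, namely that \emph{losing} an arbitrarily small amount of smoothness is enough to cross freely between any two values of the fine parameter $q$. In particular one must use the strict inequality $s' < s$: the borderline spaces $B_{p,q_1}^{s}(\T)$ and $B_{p,q_2}^{s}(\T)$ at the critical exponent $s = s_f(p,\cdot)$ need \emph{not} coincide, which is precisely why the proposition asserts equality of the suprema rather than equality of membership at the critical smoothness itself.
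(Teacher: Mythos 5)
Your argument is correct and follows essentially the same route as the paper: both proofs invoke the embedding of Proposition~\ref{prop:embeddings} with the same integrability parameter $p$ to show that an arbitrarily small loss of smoothness lets one pass from $B_{p,q_1}^s(\T)$ to $B_{p,q_2}^{s'}(\T)$ for $s' < s$, then take suprema and conclude by symmetry. The paper phrases this with an explicit sequence $s_n \uparrow s_f(p,q_1)$ and the spaces $B_{p,q_2}^{s_n - 1/n}(\T)$, but the content is identical to your order-theoretic version.
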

    
    \begin{proof}
    {\color{black}
    Let $f\in \SpT$ with $p,q_1,q_2$ as stated above, and let $s_n$ be an increasing sequence converging to $s_f(p,q_1)$. Then since 
    $f\in B_{p,q_1}^{s_n}(\T)$ for all $n$, 
    we have by the embedding properties 
    $f\in B_{p,q_2}^{s_n-1/n}(\T)$. Therefore 
    $s_f(p,q_2)\geq \sup{s_n-1/n} = s_f(p,q_1)$. 
    Similarly, $s_f(p,q_1)\geq  s_f(p,q_2)$. 
    }
    \end{proof}
    
    \begin{definition}
    Let $f \in \SpT$. We denote by $s_f(p) \in (-\infty, \infty]$ the common value of the $s_f(p,q)$, $0 < q \leq \infty$. 
     The function $p \mapsto s_f(p)$ is called the \emph{critical smoothness function} of $f$. 
    \end{definition}

    It is only possible to achieve an infinite local smoothness for infinitely smooth functions, as stated in the following result.
    
    \begin{proposition} \label{prop:removeS}
    Let $f \in \SpT$.
    If $f \in \ST$, then $s_f(p) = \infty$ for every $0< p \leq \infty$. Conversely, if there exists some $0 < p_0 \leq \infty$ such that $s_f(p_0) = \infty$, then $s_f(p) = \infty$ for every $0 < p \leq \infty$ and we have $f \in \ST$. 
    \end{proposition}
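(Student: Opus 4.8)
The plan is to reduce everything to the projective/inductive limit identities \eqref{eq:SSpviaB}, together with the monotonicity of the embeddings \eqref{eq:embedsmooth}. Throughout I fix an auxiliary value of $q$, which is legitimate by Proposition \ref{prop:removeq}: the quantity $s_f(p)$ is independent of this choice, and \eqref{eq:SSpviaB} holds for any fixed pair $(p,q)$.

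For the forward implication, suppose $f \in \ST$. The left identity in \eqref{eq:SSpviaB} reads $\ST = \bigcap_{s \in \R} B_{p,q}^s(\T)$, so $f \in B_{p,q}^s(\T)$ for every $s \in \R$. Hence the set over which the supremum in \eqref{eq:cs} is taken is all of $\R$, and $s_f(p,q) = \infty$. Since $p$ was arbitrary, $s_f(p) = \infty$ for every $0 < p \leq \infty$.

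For the converse, assume $s_f(p_0) = \infty$ for some $p_0$. First I would convert this into a membership statement: because $s_f(p_0, q) = \sup\{s : f \in B_{p_0,q}^s(\T)\} = \infty$, for every $s \in \R$ there exists $s' > s$ with $f \in B_{p_0,q}^{s'}(\T)$, and the embedding \eqref{eq:embedsmooth} then gives $f \in B_{p_0,q}^{s}(\T)$. Thus $f$ lies in $B_{p_0,q}^{s}(\T)$ for all $s \in \R$, that is, $f \in \bigcap_{s\in\R} B_{p_0,q}^s(\T) = \ST$ again by \eqref{eq:SSpviaB}. Having established $f \in \ST$, the forward implication (applied for each $p$) immediately yields $s_f(p) = \infty$ for every $0 < p \leq \infty$, completing the argument.

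There is no serious obstacle here: the statement is essentially a reformulation of the intersection identity \eqref{eq:SSpviaB}. The only point requiring a little care is the opening step of the converse, namely passing from ``the supremum equals $+\infty$'' to ``$f$ belongs to every $B_{p_0,q}^s(\T)$''. This is exactly where the monotone embedding \eqref{eq:embedsmooth} enters, ruling out the a priori possibility that $f$ sits in spaces of arbitrarily high smoothness index only along some sparse sequence of values of $s$ without belonging to all of them.
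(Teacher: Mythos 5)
Your proof is correct, but you route the converse differently from the paper, and your route is slightly more economical. The paper first transfers infinite smoothness from $p_0$ to an arbitrary $p$ by invoking the cross-integrability embedding of Proposition \ref{prop:embeddings} (choosing $s_0 > s + \left(\tfrac{1}{p_0}-\tfrac{1}{p}\right)_+$ so that $B_{p_0}^{s_0}(\T)\subset B_p^s(\T)$), obtains $s_f(p)=\infty$ for every $p$, and only then concludes $f\in\ST$ from the intersection identity. You instead observe that the left relation in \eqref{eq:SSpviaB} holds for the \emph{fixed} pair $(p_0,q)$, so that membership in $B_{p_0,q}^s(\T)$ for all $s$ (which you correctly extract from the supremum via the monotone embedding \eqref{eq:embedsmooth}) already yields $f\in\ST$; the statement $s_f(p)=\infty$ for all other $p$ then falls out of the forward implication. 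This reversal lets you dispense with Proposition \ref{prop:embeddings} entirely and makes the result visibly a consequence of \eqref{eq:SSpviaB} and \eqref{eq:embedsmooth} alone, at the cost of leaning on the (nontrivial, but cited) fact that the intersection $\bigcap_{s}B_{p,q}^s(\T)$ equals $\ST$ for every choice of $p$, not just one convenient one. Both arguments are complete; yours is arguably the cleaner logical organization.
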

    
    \begin{proof}
    The first implication is direct consequence of the left relation \eqref{eq:SSpviaB}. For the converse, we observe that for each $0 <  p \leq \infty$ and any $s \in \R$, there exists $s_0 \in \R$ such that $B_{p_0}^{s_0}(\T) \subset B_p^s(\T)$ (according to Proposition \ref{prop:embeddings}, one can select $s_0 > s +\left( \frac{1}{p_0} - \frac{1}{p} \right)_+ $ with $x_+ = \max(x,0)$). Hence, since $s_f(p_0) = \infty$, we have that $f \in B_{p_0}^{s_0}(\T)$ and therefore $f \in B_p^s(\T)$. This is true for any $s\in \R$, implying that $s_f(p) = \infty$. With the left relation in \eqref{eq:SSpviaB}, this implies that $f \in \ST$.  
    \end{proof}
    
    \textit{Remark.} Thanks to Proposition \ref{prop:removeS}, one can discard infinitely smooth functions, and therefore restrict to generalized functions for which  $s_f(p) \in \R$ is necessarily finite for every $0 < p \leq \infty$. \\
    
        We expect the critical smoothness function to be a useful mathematical concept to help in describing and characterizing the smoothness properties of functions and random processes. This motivates our study. The main question addressed in this paper is the following: What is the possible evolution of the critical smoothness $p \mapsto s_f(p)$ when $f$ is in $\SpT$? In other terms, we aim at identifying the class of critical smoothness functions. 

    \subsection{Contributions and Outline}
    
    In a much better way than Fourier bases, wavelet bases have shown to efficiently characterize function spaces. Their key aspect is that they are unconditional bases for most of the classical function spaces, including Besov spaces~\cite{MeyerWaO}. One can therefore characterize the fact that $f$ lies in a given Besov spaces via simple conditions (\emph{e.g.}, the finiteness of a weighted $\ell_p$-norm) on its wavelet coefficients~\cite{Triebel2008function}, hence determining the critical smoothness functions.
    In this paper, we follow this line of research:  we use Besov spaces to characterize the smoothness properties of functions and use wavelet methods to characterize this Besov regularity.
    Our contributions are the following.
    
    \begin{itemize}[label=\raisebox{0.30ex}{\tiny$\bullet$}]
    
          \item[(i)] Our main result is Theorem \ref{theo:main}, where we fully characterize the functions $s$ that are the critical smoothness functions of some $f\in \SpT$. We show that the class of critical smoothness functions coincides with the class of functions $s$ such that $\frac{1}{p} \mapsto s(p)$ is increasing, concave, and $1$-Lipschitz over $[0, \infty)$. 
        
        \item[(ii)] In addition to our main theorem, we prove various properties of the critical smoothness functions: right and left differentiability, slopes evolution, behavior of $s_{f+g}(p)$ in terms of $s_f(p)$ and $s_g(f)$, etc.
        
        \item[(iii)] We show how to connect the critical smoothness function to the wavelet compressibility of a given generalized function $f$, which measures the speed of decay of the best $N$-term wavelet approximation of $f$ in a given Besov space. We demonstrate how the critical smoothness function is sufficient to determine this compressibility in Theorem \ref{theo:compress}. 
        
    \end{itemize}
    
    The paper is organized as follows. 
    We position our contributions in relation to other works for deterministic and random functions in Section \ref{sec:related}. 
    We formally define periodic Besov spaces using wavelet methods in Section \ref{sec:prelim}. 
    Our main result is presented and proved in Section \ref{sec:mainresult}, together with interesting properties of critical smoothness functions. 
    In Section \ref{sec:compressibility}, we discuss the compressibility of a generalized function $f$ in wavelet bases and connect it to its critical smoothness function $s_f(p)$.
    Finally, we discuss our results and conclude in Section \ref{sec:conclusion}.
    
    \section{Related Works and Examples}
    \label{sec:related} 


        The traditional theory of function spaces classifies functions with respect to their smoothness properties, measured in terms of some integrability parameter $0<p\leq \infty$.
        One of the major achievement of the global theory of function spaces in the 20th century has been to define general function classes capturing most of the smoothness, approximation, or integrability properties of functions, notably with the introduction of Sobolev $W^s_p$ and Besov spaces $B_{p,q}^s$~\cite{schmeisser87}. 
        Many works in functional analysis and stochastic processes have dealt with the determination of the Besov regularity of some (random) function. Some of these results can be re-interpreted in terms of the critical smoothness function of the studied function. We provide some examples.


        \subsubsection*{The Critical Smoothness of Deterministic Functions}
        
                
        Perhaps the simplest example of a generalized function for which one can characterize the Besov regularity is the Dirac impulse $\delta$. Its critical function is given by $s_\delta(p) = \frac{1}{p} - 1$~\cite[p. 164]{schmeisser87};  see also \cite[Proposition 5]{aziznejad2020wavelet} for a wavelet-based proof. From this, one deduce that the critical smoothness of piecewise constant functions $f$ is $s_f(p)= \frac{1}{p}$.\\
        
        \textit{Local Hölder regularity:} Several works deal with the \emph{local} Hölder regularity of functions: for a continuous function $f : \R \rightarrow \R$, one can ask what is the local Hölder regularity $h_f(x_0)$ of $f$ at $x_0 \in \R$. 
        The function $x_0 \mapsto h_f(x_0)$ is called the \emph{local Hölder function}~\cite{seuret2002local} of $f$ and the possible evolution of $h_f$ for continuous $f$ has been characterized~\cite{andersson1997characterization,daoudi1998construction,jaffard1995functions}, with extensions to non-continuous functions~\cite{ayache2010holder}.
        The use of wavelet methods in this context is well established~\cite{jaffard1996wavelet}. 
        In comparison, we consider \emph{uniform} smoothness properties: we look for the critical Hölder regularity $s_f(\infty)$ of $f$ such that $f$ is uniformly $s$-Hölder (\emph{i.e.}, $s$-Hölder at any $x_0$) for any $s < s_f(\infty)$. The local Hölder function is linked to the critical smoothness function via the relation $s_f(\infty) = \inf_{x_0} h_f(x_0)$: The uniform Holder regularity of $f$ is the worst case of the local smoothness of $f$. \\
        
        \textit{Fractals and PDEs:} The study of the Besov regularity is interesting for functions with limited smoothness that naturally arise in fractals~\cite{Mandelbrot1982fractal} and PDEs.
        Some authors have studied the Besov regularity of fractal functions.
        In~\cite{jaffard1996local}, Stéphane Jaffard and Benoît Mandelbrot considered space-filling fractal functions and determined their Hölder regularity using wavelet techniques.    
        General classes of fractal functions are introduced in~\cite{massopust2005splines,massopust2016local} and shown to be in some Besov spaces, which gives some lower bounds for $s_f(p)$.
        The connection between fractals, wavelets and smoothness function spaces is developed in~\cite[Chapter 12]{massopust2016fractal}. 
        Besov spaces are also used to characterize the regularity of the solutions of partial differential equations~\cite{dahlke1997besov,hansen2014n}, including non-linear ones~\cite{dahlke2020besov}. In this context, one main motivation is the link between the best $N$-term wavelet approximation and the Besov regularity of functions~\cite{Devore1998Nterm} for the approximation of solutions of PDEs. 
        
        \subsubsection*{The Critical Smoothness of Random Functions} 

        \textit{The Brownian motion:} Understanding the sample-path properties of random models has attracted considerable attention since the pioneering works of Paul Lévy~\cite{levy1965processus}.
        Historically, investigations started with the regularity of the sample paths of the Brownian motion $B$, which is Hölder continuous of order $\alpha$ if and only if $\alpha < 1/2$. With our notation, this corresponds to the parameter $p = \infty$  and we have\footnote{The historical works on the Hölder regularity of the Brownian motion were neither formulated for Besov spaces $B_{\infty,\infty}^s$ nor in the periodic setting. Nevertheless, it is possible to convert those results in our setting,  as discussed in Section~
        \ref{sec:beyondbesov}.} 
        $s_B(\infty) = \frac{1}{2}$.
        The Besov regularity of the Brownian motion and its extension has been studied in~\cite{ciesielski1993orlicz,Ciesielski1993quelques,hytonen2008,Roynette1993}. 
        The Gaussian white noise $W$, which is simply the (weak) derivative of the Brownian motion, is strongly related since we have $s_B(p) = s_W(p) + 1$ for any $p>0$. It has been studied by Veraar over the torus for $p \geq 1$ with Fourier domain techniques in~\cite{Veraar2010regularity} and completed for $0 < p < 1$ in~\cite{aziznejad2020wavelet}. These works allow one to deduce that the Brownian motion satisfies
        \begin{equation}\label{eq:sBp}
        s_B(p) = \frac{1}{2}, \quad \forall 0< p \leq \infty. 
        \end{equation}
         More generally, the fractional Brownian motion $B_H$ with Hurst index $H \in (0,1)$~\cite{Mandelbrot1968} is such that $s_{B_H}(p) = H$ for every $0 < p \leq \infty$~\cite[Theorem IV.3]{Ciesielski1993quelques}. This fact has been generalized for any Gaussian process $X$ such that $\Lop_\gamma X = W$ with $\Lop$ a $\gamma$-admissible operators (such as the $\gamma$th order derivative, see \cite[Definition 8]{Fageot2017nterm}) and $W$ a Gaussian white noise, whose Besov regularity is~\cite[Corollary 1]{Fageot2017nterm} $s_X(p) = \gamma - \frac{1}{2}, \quad \forall 0 < p \leq \infty$.     This highlights a specificity of classical Gaussian models, for which the critical Besov function is constant with respect to $p$. \\

        \textit{Lévy processes and their extensions:} 
        More generally, several authors have considered the class of Lévy processes $L$, which generalizes the Brownian motion by relaxing the Gaussian hypothesis~\cite{Sato1994levy}. Their Besov regularity has been considered by René Schilling~\cite{Schilling1997Feller,Schilling1998growth,Schilling2000function} and Volken Herren~\cite{Herren1997levy}. They gave sufficient conditions ensuring that $L$ belongs to a given Besov space. Sharp results regarding the Besov smoothness are obtained in~\cite{aziznejad2018wavelet,Fageot2017besov,Fageot2017multidimensional}.
        One can summarize the results by saying that, under mild conditions\footnote{Technically, we require that two Blumenthal--Getoor indices are equal, see~\cite{aziznejad2018wavelet}, especially Eq. (12), for more details.}, for a non-Gaussian Lévy process, we have
        \begin{equation}\label{eq:sLp}
        s_L(p) = \frac{1}{\max(p, \beta)},
        \end{equation}
        where $\beta \in [0,2]$ is the Blumenthal-Getoor index of $L$, which is known to characterize many of the local properties of a Lévy process~\cite{Blumenthal1961sample,fageot2017gaussian}. This includes compound Poisson processes $P$, for which $\beta = 0$ and therefore $s_{P} = \frac{1}{p}$ and non-Gaussian $\alpha$-stable processes~\cite{Taqqu1994stable} $L_\alpha$ for which $\beta = \alpha \in (0,2]$ and hence $s_{L_\alpha} (p ) = \frac{1}{\max(p,\alpha)}$. 
        This has been generalized for random processes $X$ that are solutions of stochastic differential equations with Lévy white noise $W$ as $\Lop_\gamma X = W$, where $\Lop_\gamma$ is a $\gamma$-admissible operator, showing that $ s_{X}(p) = (\gamma - 1 )  +  \frac{1}{\max(p, \beta)}$~\cite[Corollary 1]{Fageot2017nterm}. This is consistent with \eqref{eq:sLp} for which $\gamma = 1$.\\

        \textit{Other random models:} More generally, Besov spaces are used to characterize the regularity of the solutions of stochastic partial differential equations driven by Gaussian or L\'evy white noises~\cite{chong2019path,cioica2012spatial,Cioica2015besov,hummel2019stochastic,hummel2020sample,lindner2011approximation}.
        Many results in the literature can be reformulated in terms of the critical smoothness function of the random processes, as we did for the Brownian motion and for Lévy processes.  
        Random models with lacunary wavelet series have also been considered~\cite{aubry2002random,jaffard2000lacunary}. For instance, \cite[Theorem 1]{bochkina2013besov} considers random processes $f$ constructed via lacunary random wavelet sequences and such that 
        $s_f(p) = s_0 + \frac{\alpha_0}{p}$
        with $0\leq \alpha_0 <1$ and $s_0 > 0$. Such critical smoothness evolution will play a crucial role to prove the main result of this paper. \\
        
        \textit{Beyond uniform smoothness:} All these contributions deal with the \emph{uniform} smoothness of random processes. This 
        is especially relevant for random models with stationary properties, for which the local smoothness is constant and therefore equal to the uniform smoothness. Even if this is beyond the scope of this paper, we mention the existence of important generalizations for which the local smoothness of the random process evolves, as is the case for multifractal Gaussian~\cite{ayache1999generalized} and Lévy processes~\cite{barral2007singularity} and for Lévy-type processes~\cite{Bottcher2014levy}, to name a few. 
    
        \subsubsection*{Besov Regularity and Wavelet Approximation}
    
        One of the main achievements of the theory of Besov spaces has been to recognize that they characterize the speed of convergence of their best $N$-term wavelet approximation~\cite{Devore1998Nterm,Cohen2003numerical}. This correspondence is made possible due to the fact that the Besov regularity is captured by sequence norms on wavelet series~\cite{MeyerWaO,Garrigos2004sharp}. It therefore comes as no surprise that the critical smoothness function is sufficient to characterize the wavelet compressibility, as detailed in Section~\ref{sec:compressibility}. The use of the Besov regularity of Lévy white noises and Lévy processes has been used to deduce the rate of the $n$-term approximation of Lévy processes in~\cite{Fageot2017nterm}, with a special emphasis on compound Poisson processes with elementary tools in~\cite{aziznejad2020wavelet}.
        
    \section{Mathematical Preliminaries} 
    \label{sec:prelim}

       \subsection{Periodic Besov Spaces}

        Besov spaces allow one to measure the smoothness properties of functions in different $L_p$ scales, including $p= \infty$ (Hölder-type regularity) or $p=2$ ($L_2$-Sobolev regularity). 
        In the periodic setting, they can be defined based on Fourier series as follows. We refer to~\cite[Section 3.5]{schmeisser87} for more details. 
        
        The Fourier series of a periodic generalized function $f \in \SpT$, is written as $(\widehat{f}_k)_{k\in \Z}$.
        We denote by $\mathrm{Supp}\{f\}$ the support of a function $f$.
        We fix two non-negative functions $\varphi_0$ and $\varphi_1$ in the Schwartz class $\mathcal{S}(\R)$ of infinitely smooth and rapidly decaying functions such that
        \begin{equation}
            \mathrm{Supp} \{\varphi_0\} \subset \{ x \in \R, \ |x|\leq 2\}, \quad \mathrm{Supp} \{\varphi_1\} \subset \{ x \in \R, \ \frac{1}{2}\leq |x|\leq 2 \},  \quad \text{and} \quad  \sum_{j \geq 0} \varphi_j = 1
        \end{equation}
        with $\varphi_j = \varphi_1(2^{-j} \cdot)$
         for all $j \geq 2$. More information on such systems $\bm{\varphi} = (\varphi_j)_{j\geq0}$, that are known to exist, can be found in~\cite[Section 2.1.1]{schmeisser87}.
         
         \begin{definition}
          Let $0<p,q\leq \infty$ and $s \in \R$. The Besov space $B_{p,q}^s(\T)$ is the space of periodic generalized functions $f$ such that 
          \begin{equation} \label{eq:normqfinite}
              \lVert f \rVert_{B_{p,q}^s} = \left( \sum_{j \geq 0} 2^{jsq} \left\lVert \sum_{k\in \Z} \varphi_j(k) \widehat{f}_k \mathrm{e}^{2 \mathrm{i} \pi k \cdot} \right\rVert_{L_p}^{q} \right)^{1/q} < \infty,
          \end{equation}
          when $q < \infty$, and 
          \begin{equation} \label{eq:normqinfty}
              \lVert f \rVert_{B_{p,\infty}^s} =  \sup_{j\geq 0} 2^{js} \left\lVert \sum_{k\in \Z} \varphi_j(k) \widehat{f}_k \mathrm{e}^{2 \mathrm{i} \pi k \cdot} \right\rVert_{L_p} < \infty.
          \end{equation}         
         \end{definition}
        
        Then, $(B_{p,q}^s(\T),  \lVert \cdot \rVert_{B_{p,q}^s})$ is a Banach space when $p$ and $q \geq 1$ and a quasi-Banach space when $p$ or $q <1$. 
        The choice of the system $\bm{\varphi}$ does not change  $B_{p,q}^s(\T)$ as a set and two different systems define equivalent Besov (quasi-)norms~\cite[Theorem 1, Section 3.5.1]{schmeisser87}.
        
    \subsection{Wavelet Frames and Besov Sequence Spaces}
    
    Wavelets can be used to characterize the Besov smoothness of generalized periodic functions from their wavelet coefficients.
    In comparison, this is not possible with Fourier series\footnote{For instance, any square-integrable function $f \in L_2(\T)$ can be transformed into a continuous function by only changing the phase of its Fourier series coefficients~\cite{kahane1977coefficients}.}~\cite{MeyerWaO}. 
    The wavelet frames (or bases) that we consider are Parseval frames of $L_2(\T)$ (see~\cite[Definition 5.1.2]{christensen2016introduction} for a precise definition). Parseval frames share most of the interesting properties of orthonormal bases but can admit some redundancies, and many wavelet systems are actually frames.
        
    A \emph{wavelet frame} is defined by a collection of finite sets $\{X_j\}_{j \geq 0}$ such that 
    \begin{equation}\label{eq:XjC}
    2^j \leq \#X_j \leq C 2^j 
    \end{equation}
    for every $j \geq 1$, where $C\geq 1$ is a constant independent of $j \geq 1$,
    and a collection of functions $\Psi_{j,k} \in L_2(\T)$ with $j \geq 0$ and $k \in X_j$ such that
    the family
    \begin{equation}
      \bm{\Psi} =  \left( \Psi_{j,k} \right)_{j\geq 0,k\in X_j}
    \end{equation}
    forms a Parseval frame of $L_2(\T)$. 
    The parameter $j \geq 0$ plays the role of the scale.
    A typical example is the Haar basis, for which $X_0 = \{0,1\}$ is of size $2$ with $\phi = \Psi_{0,0} = 1$ and $\psi = \Psi_{0,1} = 1_{[0,1/2)} - 1_{[1/2,1)}$ and $X_j = \{0, \ldots , 2^{j-1} \}$ is of size $2^j$ for each $j \geq 1$ with  $\Psi_{j,k} = 2^{j/2} \psi(2^j \cdot - k)$. This corresponds to $C = 1$ in \eqref{eq:XjC}. The Haar system is an orthonormal basis of $L_2(\T)$. 
    
    The \emph{Besov sequence spaces} $b_{p,q}^s$ are indexed in the same way as the wavelets, and they are defined by the norm
    \begin{equation} \label{eq:sequencenorm}
        \lVert a \rVert_{b_{p,q}^s} 
        = 
        \left(   
        \sum_{j \geq 0} 
        2^{j  \left( s - \frac{1}{p} \right) q} 
        \left( \sum_{k \in X_j} | a_{j,k} |^p \right)^{q/p} \right)^{1/q},
    \end{equation}
    with the usual adaptation when $p$ or $q=\infty$. The Besov sequence spaces and the periodic Besov spaces are connected by the following definition.     
        
    \begin{definition} \label{def:admissible}
    Let $0 \leq p_0 < \infty$ and $0 < s_0 \leq \infty$.
     We say that a Parseval frame of the type above is \emph{$(p_0,s_0)$-admissible} if the Besov norm is equivalent to the Besov sequence space norm for all the spaces $B_{p,q}^s(\T)$ with $|s| < s_0$, $p > p_0$, and $q > 0$, in the sense that
     \begin{equation}
        \lVert a \rVert_{b_{p,q}^s} \quad \text{and} \quad \lVert f \rVert_{B_{p,q}^s}
     \end{equation}
     are two equivalent (quasi-)norm where
     \begin{equation}
         \label{eq:relationsequencefunction}
            a_{j,k} = 2^{j/2} \langle f ,\Psi_{j,k} \rangle
     \end{equation}
     for any $j \geq 0$ and $k\in X_j$. 
    \end{definition}
    
   \textit{Remarks.} (i) If a Parseval frame is $(p_0,s_0)$-admissible, then the wavelets are all in $B_{p_0}^{s_0}(\T)$.
    The condition $|s|< s_0$ is useful also for $s <0$: if $ s \leq -s_0$, it may be that the wavelet coefficients between $f \in B_{p}^s(\T)$ and the wavelets $\Psi_{j,k} \in B_{p_0}^{s_0}(\T)$ are not well-defined. 
    In other terms, a $(p_0,s_0)$-admissible Parseval frame can be used to characterize the Besov regularity of periodic generalized functions via their wavelet coefficients
    for parameters $s,p,q$ such that $|s| < s_0$, $p > p_0$, and $q > 0$. 
    
    (ii) For the relation $a_{j,k} = 2^{j/2} \langle f ,\Psi_{j,k} \rangle$ in the equivalence between periodic Besov spaces and Besov sequence spaces, we follow the convention of~\cite{Triebel2008function}. It is possible to chose a different one, as done for instance in~\cite{narcowich2006decomposition}\footnote{In~\cite{narcowich2006decomposition}, the authors define $a_{j,k} = \langle f ,\Psi_{j,k} \rangle$, which requires to change \eqref{eq:sequencenorm}, where $ 2^{j  \left( s - \frac{1}{p} \right) q} $ becomes $ 2^{j  \left( s - \frac{1}{p} + \frac{1}{2} \right) q} $.}.  \\
    
     Admissible orthonormal bases and more general Parseval frames of $L_2(\T)$ are known to exist.
   The admissibility of the periodic Daubechies wavelets considered in \cite{Triebel2008function}
    depend on the smoothness of the wavelet.
    It is particularly interesting to design {$(0,\infty)$}-admissible Parseval frames, since we can use them for any tempered generalized functions $f \in \SpT$ with no restriction on the smoothness (this is not the case for Daubechies wavelets due to their limited smoothness).  This requires in particular that $\Psi_{j,k} \in \ST$ for any $j \geq 0$ and $k \in X_j$. 
     The Parseval frames of  \cite{narcowich2006localized,narcowich2006decomposition} are $(0,\infty)$-admissible. 
    While we could not find a detailed proof in the literature, periodized Meyer wavelets, that form an orthonormal basis of $L_2(\T)$~\cite{offin1993note,jaffard2000lacunary}, should also be $(0,\infty)$-admissible. From now on, we assume that we fix some $(0,\infty)$-admissible Parseval frame $\bm{\Psi}$. Hence, for any $f \in \SpT$ and Besov sequence $a = (a_{j,k})_{j \geq 0, k \in X_j}$ given by \eqref{eq:relationsequencefunction}, we have that
    \begin{equation} \label{eq:equivalencefanda}
        f \in B_{p,q}^s(\T) \Longleftrightarrow a \in b_{p,q}^s
    \end{equation}
    for any $0< p,q \leq \infty$ and $s\in \R$.

        \subsection{Embedding between Besov Spaces}
        
        We summarize embedding results between periodic Besov spaces. In Proposition~\ref{prop:embeddings}, we give sufficient conditions such that a given Besov space is included in another one. In Proposition~\ref{prop:interpol}, we give sufficient conditions to ensure that an intersection of two Besov spaces is included in a third one. This second result relies on interpolation theory.
        
        \begin{proposition} \label{prop:embeddings}
        Let $0 < p_0, p_1, q_0 , q_1 \leq \infty$, $s \in \R$, and $\epsilon > 0$. If $p_0 \leq p_1$, then, we have the topological embeddings:
        \begin{align}
            B_{p_0,q_0}^{s + \frac{1}{p_0} - \frac{1}{p_1}} (\T)  \subseteq B_{p_1,q_1}^{s- \epsilon} (\T) \quad \text{and} \quad 
            B_{p_1,q_1}^{s} (\T)  \subseteq B_{p_0,q_0}^{s- \epsilon} (\T).
        \end{align}
        \end{proposition}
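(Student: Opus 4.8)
The plan is to reduce both embeddings to the corresponding inclusions between the Besov sequence spaces $b_{p,q}^s$ and prove them there. Since we have fixed a $(0,\infty)$-admissible Parseval frame, the equivalence \eqref{eq:equivalencefanda} together with Definition \ref{def:admissible} gives, for \emph{every} $0<p,q\leq\infty$ and $s\in\R$, a (quasi-)norm equivalence between $\lVert f\rVert_{B_{p,q}^s}$ and $\lVert a\rVert_{b_{p,q}^s}$, where $a=(2^{j/2}\langle f,\Psi_{j,k}\rangle)_{j,k}$. Crucially, the sequence $a$ attached to a given $f$ is independent of the target space, so it suffices to establish the two sequence inclusions $b_{p_0,q_0}^{s+1/p_0-1/p_1}\subseteq b_{p_1,q_1}^{s-\epsilon}$ and $b_{p_1,q_1}^{s}\subseteq b_{p_0,q_0}^{s-\epsilon}$ with the attending quasi-norm bounds; composing with the equivalences then yields the topological embeddings between the function spaces.

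Both inclusions are handled scale by scale in $j$, isolating the inner quantity $\sigma_j^{(p)}=\bigl(\sum_{k\in X_j}|a_{j,k}|^p\bigr)^{1/p}$ appearing in \eqref{eq:sequencenorm}. For the first inclusion I would use the nesting of finite $\ell_p$-quasinorms, $\sigma_j^{(p_1)}\leq\sigma_j^{(p_0)}$ for $p_0\leq p_1$: substituting this into the definition of $\lVert\cdot\rVert_{b_{p_1,q_1}^{s-\epsilon}}$ and matching the powers of $2^j$, the smoothness shift $\tfrac{1}{p_0}-\tfrac{1}{p_1}$ is exactly what cancels the algebraic factors and leaves the level-$j$ factor of the target bounded by $2^{-j\epsilon}$ times the level-$j$ factor of the source. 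For the second inclusion the $\ell_p$ inequality points the wrong way, so I would instead apply Hölder on the finite set $X_j$, $\sigma_j^{(p_0)}\leq(\#X_j)^{1/p_0-1/p_1}\sigma_j^{(p_1)}$, and bound $\#X_j\leq C2^j$ via \eqref{eq:XjC}; the factor $2^{j(1/p_0-1/p_1)}$ again matches the exponent bookkeeping and leaves a clean $2^{-j\epsilon}$ decay up to the fixed constant $C^{1/p_0-1/p_1}$.

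The only delicate point, and the reason the strict loss $\epsilon>0$ is needed, is the passage between the outer $\ell_{q_0}$- and $\ell_{q_1}$-quasinorms, which are \emph{a priori} unrelated. The observation that unlocks this is that every sequence in $\ell_{q_0}$ is bounded with $\sup_j|c_j|\leq\lVert c\rVert_{\ell_{q_0}}$; hence, writing $(c_j)$ for the level factors of the source norm, the target factors are dominated by (a constant times) $2^{-j\epsilon}\lVert c\rVert_{\ell_{q_0}}$, and $(2^{-j\epsilon})_{j\geq 0}\in\ell_{q_1}$ for every $q_1>0$ by summing a geometric series. This produces the required quasi-norm bound for arbitrary $0<q_0,q_1\leq\infty$, with no appeal to any sharp, $q$-dependent Sobolev embedding.

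I do not expect a genuine obstacle: once the reduction to $b_{p,q}^s$ is in place the argument is elementary, the main care being to keep the exponent bookkeeping consistent and to recognize that the positive gap $\epsilon$ is precisely what decouples the secondary indices $q_0,q_1$. Alternatively, both inclusions are classical and could simply be quoted from the function-space literature such as \cite{schmeisser87}, but the wavelet/sequence route is self-contained given the machinery already assembled.
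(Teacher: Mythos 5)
Your argument is correct, but it takes a genuinely different route from the paper: the paper does not prove Proposition \ref{prop:embeddings} at all, it simply compiles the classical Fourier-analytic embedding results of \cite[Section 3.5.5]{schmeisser87}. You instead transfer everything to the sequence spaces $b_{p,q}^s$ via the fixed $(0,\infty)$-admissible frame and argue scale by scale, and the bookkeeping checks out: with $\sigma_j^{(p)}=\bigl(\sum_{k\in X_j}|a_{j,k}|^p\bigr)^{1/p}$, the nesting $\sigma_j^{(p_1)}\leq\sigma_j^{(p_0)}$ makes the level-$j$ factor of $b_{p_1,q_1}^{s-\epsilon}$ at most $2^{-j\epsilon}$ times that of $b_{p_0,q_0}^{s+1/p_0-1/p_1}$, the finite-set H\"older bound $\sigma_j^{(p_0)}\leq(\#X_j)^{1/p_0-1/p_1}\sigma_j^{(p_1)}$ with $\#X_j\leq C2^j$ does the same for the second inclusion (valid for all $0<p_0\leq p_1\leq\infty$, including the quasi-norm range), and your observation that $\sup_j|c_j|\leq\lVert c\rVert_{\ell_{q_0}}$ combined with $(2^{-j\epsilon})_j\in\ell_{q_1}$ is exactly the right way to decouple the secondary indices at the cost of the $\epsilon$-loss. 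What your route buys is a self-contained, elementary proof that also makes transparent why $\epsilon>0$ is needed and why $q_0,q_1$ are irrelevant; what the paper's route buys is brevity and independence from the wavelet machinery. The one caveat is that your self-containedness is only relative: the admissibility of the frame (the norm equivalence in Definition \ref{def:admissible}) is itself quoted from the literature, and those discrete characterizations are traditionally established alongside the Fourier-analytic theory, so you are not eliminating the external input, only relocating it.
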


        Proposition~\ref{prop:embeddings} compiles the results of~\cite[Section 3.5.5]{schmeisser87}.


        \begin{proposition} \label{prop:interpol}
        Let $0 < p_0, p_1 \leq \infty$, $0 <  q_0 , q_1 \leq \infty$ and $s_0, s_1 \in \R$. 
        For any $\lambda \in (0,1)$, we set 
        \begin{equation}
            \frac{1}{p} =  \frac{\lambda}{p_0}  +  \frac{1 - \lambda}{p_1}, \quad  s = \lambda s_0 + (1-\lambda) s_1, \quad \text{and} \quad \frac{1}{q} =  \frac{\lambda}{q_0}  +  \frac{1 - \lambda}{q_1}.
        \end{equation} 
        Then, we have that
        \begin{equation} \label{eq:interpol}
             B_{p_0,q_0}^{s_0}(\T) \cap  B_{p_1,q_1}^{s_1}(\T) \subseteq   B_{p,q}^{s}(\T),
        \end{equation}
        where the (quasi-)norm on the intersection is $\lVert \cdot \rVert_{B_{p_0}^{s_0}} + \lVert \cdot \rVert_{B_{p_1}^{s_1}}$, which specifies a {(quasi-)Banach} topology.
        \end{proposition}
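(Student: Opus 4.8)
The plan is to transfer the whole statement to the Besov sequence spaces $b_{p,q}^s$ through the fixed $(0,\infty)$-admissible Parseval frame $\bm{\Psi}$. By the equivalence \eqref{eq:equivalencefanda}, with $a_{j,k} = 2^{j/2}\langle f, \Psi_{j,k}\rangle$ as in \eqref{eq:relationsequencefunction}, the embedding \eqref{eq:interpol} is implied by the single sequence-space inequality
\begin{equation}
\lVert a \rVert_{b_{p,q}^s} \leq \lVert a \rVert_{b_{p_0,q_0}^{s_0}}^{\lambda}\, \lVert a \rVert_{b_{p_1,q_1}^{s_1}}^{1-\lambda},
\end{equation}
valid for every sequence $a$. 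Indeed, this log-convexity estimate (holding up to the multiplicative constants of the norm equivalences) simultaneously yields the set-theoretic inclusion and the continuity of the embedding, since by the arithmetic--geometric inequality the right-hand side is dominated by $\lVert \cdot \rVert_{B_{p_0}^{s_0}} + \lVert \cdot \rVert_{B_{p_1}^{s_1}}$, the sum quasi-norm on the intersection, which therefore specifies the claimed quasi-Banach topology.

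The inequality itself I would prove in two steps, mirroring the two indices $p$ and $q$ entering the definition \eqref{eq:sequencenorm}. First, at a fixed scale $j$ I apply the elementary interpolation inequality for finite $\ell^p$ quasi-norms: since $\frac{1}{p} = \frac{\lambda}{p_0} + \frac{1-\lambda}{p_1}$,
\begin{equation}
\Big(\sum_{k\in X_j} |a_{j,k}|^p\Big)^{1/p} \leq \Big(\sum_{k\in X_j} |a_{j,k}|^{p_0}\Big)^{\lambda/p_0}\Big(\sum_{k\in X_j} |a_{j,k}|^{p_1}\Big)^{(1-\lambda)/p_1}.
\end{equation}
Writing $U_j = 2^{j(s_0 - 1/p_0)}(\sum_k |a_{j,k}|^{p_0})^{1/p_0}$ and $V_j = 2^{j(s_1-1/p_1)}(\sum_k|a_{j,k}|^{p_1})^{1/p_1}$ and using the linear identity $s - \frac{1}{p} = \lambda(s_0 - \frac{1}{p_0}) + (1-\lambda)(s_1 - \frac{1}{p_1})$, the $j$-th term of $\lVert a \rVert_{b_{p,q}^s}^q$ is bounded by $U_j^{\lambda q} V_j^{(1-\lambda)q}$. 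Second, I sum over $j$ and apply Hölder's inequality with the exponents $r = \frac{q_0}{\lambda q}$ and $r' = \frac{q_1}{(1-\lambda)q}$, which are conjugate precisely because $\frac{1}{q} = \frac{\lambda}{q_0} + \frac{1-\lambda}{q_1}$; this gives $\lVert a \rVert_{b_{p,q}^s}^q \leq \lVert a \rVert_{b_{p_0,q_0}^{s_0}}^{\lambda q}\lVert a \rVert_{b_{p_1,q_1}^{s_1}}^{(1-\lambda)q}$, and taking $q$-th roots yields the claim.

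The point deserving care — and the only genuine subtlety in the quasi-Banach range $p_i, q_i < 1$ — is the admissibility of the two Hölder steps. For the inner step one checks that the exponents $\frac{p_0}{\lambda p}$ and $\frac{p_1}{(1-\lambda)p}$ are conjugate and both $\geq 1$, which reduces to $\frac{\lambda}{p_0} \leq \frac{1}{p}$ and $\frac{1-\lambda}{p_1} \leq \frac{1}{p}$; these hold automatically since each summand of $\frac{1}{p}$ is nonnegative. The same reasoning makes $r, r' \geq 1$ in the outer step. Hence both applications of Hölder are legitimate even when $p, q < 1$, so no Banach-space structure is needed and I avoid invoking complex or real interpolation machinery altogether. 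What remains is the routine bookkeeping for the endpoint cases $p_i = \infty$ or $q_i = \infty$, where the sums in \eqref{eq:sequencenorm} become suprema and the Hölder inequalities degenerate into straightforward bounds.
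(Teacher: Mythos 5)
Your argument is correct, and it takes a genuinely different route from the paper. The paper disposes of this proposition in one line by citing the complex interpolation theorem for periodic Besov spaces in Schmeisser--Triebel (Section 3.6.2): the intersection sits inside the interpolation space, which embeds into $B_{p,q}^s(\T)$. You instead push everything to the sequence side through the fixed $(0,\infty)$-admissible frame and prove the log-convexity estimate $\lVert a \rVert_{b_{p,q}^s} \leq \lVert a \rVert_{b_{p_0,q_0}^{s_0}}^{\lambda}\lVert a \rVert_{b_{p_1,q_1}^{s_1}}^{1-\lambda}$ by two applications of H\"older, first in $k$ at fixed scale and then in $j$. Your verification that the relevant H\"older exponents $\tfrac{p_0}{\lambda p}, \tfrac{p_1}{(1-\lambda)p}, \tfrac{q_0}{\lambda q}, \tfrac{q_1}{(1-\lambda)q}$ are conjugate and automatically at least $1$ is the right observation: it is what makes the argument go through uniformly in the quasi-Banach range without any interpolation machinery, whereas complex interpolation for quasi-Banach spaces is itself a somewhat delicate matter that the paper implicitly outsources. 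Your route is therefore more self-contained and elementary, and it yields the stronger multiplicative (Lyapunov-type) estimate rather than merely the additive bound against $\lVert \cdot \rVert_{B_{p_0}^{s_0}} + \lVert \cdot \rVert_{B_{p_1}^{s_1}}$; the price is that it leans on the wavelet characterization of Definition \ref{def:admissible} (so the implied constants are those of the norm equivalences), while the paper's citation proves the embedding intrinsically, independently of any choice of frame. Both are legitimate within the paper's setup, since the admissible frame is fixed before the embedding section.
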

        
        
        Proposition~\ref{prop:interpol} is a direct corollary of the theorem presented in~\cite[Section 3.6.2]{schmeisser87} on the complex interpolation of periodic Besov spaces.  Indeed, this result states that the interpolation space between $B_{p_0,q_0}^{s_0}(\T)$ and  $B_{p_1,q_1}^{s_1}(\T)$, which contains the intersection $B_{p_0,q_0}^{s_0}(\T) \cap  B_{p_1,q_1}^{s_1}(\T)$ by definition, is embedded in $B_{p,q}^{s}(\T)$. In Figure~\ref{fig:embed}, we represent the embeddings of Propositions~\ref{prop:embeddings} and \ref{prop:interpol} in $(1/p,s)$-diagrams.

\begin{figure}[h!]  
\centering 
\begin{subfigure}[b]{0.40\textwidth}
\begin{tikzpicture}[x=2cm,y=2cm,scale=0.34]

\fill[green, opacity= 0.4] (0,-1) -- (2,1) -- (3,1) -- (3,-3/2) -- (0,-3/2) -- cycle; 
\draw[green, thick, dashed,->](2,1) --(3,1);
\draw[green, thick, dashed,- ](0,-1) -- (2,1);
\draw[green, thick,->](0,-1) -- (0,-3/2);

\fill[red, opacity= 0.5] (0,1) -- (2,1) -- (3,2) -- (0,2) -- cycle; 
\draw[red, thick, dashed,->](2,1) --(3,2);
\draw[red, thick, dashed,- ](2,1) -- (0,1);
\draw[red, thick,->](0,1) -- (0,2);

\draw[thick, ->] (0,0)--(3,0) node[circle,right] {$\frac{1}{p}$} ;
\draw[thick, ->] (0,-3/2)--(0,2) node[circle,above] {$s$} ;

\draw[ thick,color=black]
(-0.05,1) -- (0.05,1)  node[black,left] { $s_0$};

\draw[ thick,color=black]
(2,-0.05) -- (2,0.05)  node[black,below] { $\frac{1}{p_0}$};

\end{tikzpicture}
\end{subfigure}
\begin{subfigure}[b]{0.40\textwidth}
\begin{tikzpicture}[x=2cm,y=2cm,scale=0.34]

\fill[green, opacity= 0.4] (0.6,-3/2) -- (0.6,0.6) -- (2.3,1.3) -- (2.3,-3/2) -- cycle; 
\draw[green, thick, dashed,- ](0.6,0.6) -- (2.3,1.3);


\draw[thick, ->] (0,0)--(3,0) node[circle,right] {$\frac{1}{p}$} ;
\draw[thick, ->] (0,-3/2)--(0,2) node[circle,above] {$s$} ;

\draw[ thick,color=black]
(-0.05,1.3) -- (0.05,1.3)  node[black,left] { $s_0$};

\draw[ thick,color=black]
(-0.05,0.6) -- (0.05,0.6)  node[black,left] { $s_1$};

\draw[ thick,color=black]
(2.3,-0.05) -- (2.3,0.05)  node[black,below] { $\frac{1}{p_0}$};

\draw[ thick,color=black]
(0.6,-0.05) -- (0.6,0.05)  node[black,below] { $\frac{1}{p_1}$};

\end{tikzpicture}
\end{subfigure}

\caption{Representation of the embeddings between Besov spaces. \emph{On the left:} If $f \in B_{p_0,q}^{s_0}(\T)$, then $f$ is in every Besov space that is in the lower shaded green regions. Conversely, if $f \notin B_{p_0}^{s_0}(\T)$, then $f$ is in none of the Besov spaces of the upper shaded red  region. The slope of the diagonal embedding line is $1$. \emph{On the right:} If $f \in B_{p_0,q_0}^{s_0}(\T) \cap B_{p_1,q_1}^{s_1}(\T)$, then $f$ is in every Besov space that is in the lower shaded green region.}
\label{fig:embed}
\end{figure}
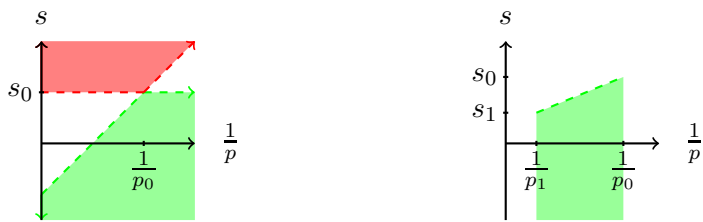

    \section{The Class of Critical Smoothness Functions}
    \label{sec:mainresult}
    
    Our goal in this section is to  prove the main result of this paper, given by Theorem \ref{theo:main} below. Note that the properties of a critical smoothness function are better expressed in terms of $\frac{1}{p} \mapsto s_f(p)$. This is reminiscent to the fact that the Besov regularity of a function is well-captured in $(1/p,s)$-diagrams (see Figure~\ref{fig:embed}) and is consistent with the examples encountered in Section \ref{sec:related}. 
    
    \begin{theorem} \label{theo:main}
    Let $f \in \SpT \backslash \ST$. Then,
    the function $\frac{1}{p} \mapsto s_f(p)$ is an increasing, concave, and $1$-Lipschitz function from $[0,\infty)$ to $\R$.
    
    Moreover, for any function $s : (0,\infty] \rightarrow \R$ such that $\frac{1}{p}\mapsto s(p)$ is increasing, concave, and $1$-Lipschitz, there exists a periodic generalized function $f \in \SpT \backslash \ST$ such that $s_f(p) = s(p)$.
    \end{theorem}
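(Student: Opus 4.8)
The plan is to treat the two directions separately, writing $u = 1/p \in [0,\infty)$ and $\sigma(u) := s_f(1/u)$ throughout, so that the asserted properties become statements about $\sigma$ on $[0,\infty)$. For the necessity direction I would first record that $\sigma$ is finite-valued: the right-hand relation in \eqref{eq:SSpviaB} gives $f \in B_{p,q}^s(\T)$ for some $s$, hence $s_f(p) > -\infty$, while $f \notin \ST$ together with \Cref{prop:removeS} forces $s_f(p) < \infty$. The three regularity properties then follow from the embedding toolbox. Monotonicity and the $1$-Lipschitz bound both come from \Cref{prop:embeddings}: for $p_0 \le p_1$ the embedding $B_{p_1,q}^{s} \subseteq B_{p_0,q}^{s-\epsilon}$ gives $s_f(p_0) \ge s_f(p_1)$, i.e. $\sigma$ is nondecreasing, while the diagonal embedding $B_{p_0,q}^{s+1/p_0-1/p_1} \subseteq B_{p_1,q}^{s-\epsilon}$ gives $s_f(p_1) \ge s_f(p_0) - (1/p_0-1/p_1)$, i.e. $\sigma(u_0)-\sigma(u_1) \le u_0-u_1$; together these say $0 \le \sigma(u_0)-\sigma(u_1) \le u_0-u_1$ whenever $u_0 \ge u_1$. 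Concavity is exactly what \Cref{prop:interpol} delivers: fixing $u = \lambda u_0 + (1-\lambda) u_1$ and taking $s_i = \sigma(u_i)-\epsilon$, the inclusion $B_{p_0,q}^{s_0} \cap B_{p_1,q}^{s_1} \subseteq B_{p,q}^{\lambda s_0 + (1-\lambda)s_1}$ yields $\sigma(u) \ge \lambda\sigma(u_0)+(1-\lambda)\sigma(u_1)-\epsilon$, and $\epsilon \to 0$ gives the concavity inequality.

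For the realizability direction I would work entirely on the wavelet side, using the fixed $(0,\infty)$-admissible Parseval frame and the equivalence \eqref{eq:equivalencefanda}. Writing $A_j(p) = (\sum_{k \in X_j} |a_{j,k}|^p)^{1/p}$ for the $\ell_p$-mass at scale $j$, the sequence norm \eqref{eq:sequencenorm} shows that membership in $b_{p,q}^s$ is governed by the size of $2^{j(s-1/p)}A_j(p)$, from which I read off the master formula $s_f(p) = \frac{1}{p} + \liminf_{j\to\infty} \frac{-\log_2 A_j(p)}{j}$ (independent of $q$, consistent with \Cref{prop:removeq}). The elementary block realizing an affine target $\ell(u) = a + bu$ with slope $b \in [0,1]$ is obtained by placing, at each scale $j$, exactly $N_j = \lceil 2^{(1-b)j}\rceil$ nonzero coefficients of common magnitude $2^{-aj}$: then $A_j(p) = N_j^{1/p}2^{-aj} = 2^{((1-b)/p-a)j + o(j)}$, and the master formula returns $s(p) = a + b/p$, so the block realizes $\ell$. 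Note $N_j \le \#X_j$ since $b \ge 0$, and all real intercepts $a$ are allowed.

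To realize an arbitrary increasing, concave, $1$-Lipschitz $\sigma$, I would use that $1$-Lipschitzness makes $\sigma$ continuous on $[0,\infty)$, so concavity lets me write $\sigma = \inf_{n\in\N} \ell_n$ as a countable infimum of supporting affine functions $\ell_n(u) = a_n + b_n u$, whose slopes necessarily lie in $[0,1]$ (exactly the achievable range). I would then partition $\N = \bigsqcup_n J_n$ into infinitely many disjoint infinite sets of scales, put the block for $\ell_n$ on the scales $j \in J_n$, and let $f$ be the resulting wavelet series. Because the scale-supports are disjoint, $A_j(p)$ equals the single active block's value on each scale, so along $J_n$ one has $\frac{-\log_2 A_j(p)}{j} \to a_n + (b_n-1)/p$, and the master formula gives $s_f(p) = \frac{1}{p} + \liminf_{j}(\cdots) = \inf_n(a_n + b_n/p) = \sigma(1/p)$. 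Finiteness of $\sigma$ yields $f \in \SpT$ through \eqref{eq:equivalencefanda} and $f \notin \ST$ through \Cref{prop:removeS}.

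The step I expect to be the main obstacle is this last $\liminf$ computation: I must show the $\liminf$ over all scales coincides with the infimum over the infinitely many interleaved blocks, simultaneously for every $p \in (0,\infty]$ including the endpoint $p=\infty$ (where $A_j(\infty) = \max_k|a_{j,k}|$). This demands controlling the $o(j)$ error of each block uniformly in $n$ — so that the limit along $J_n$ is exactly $a_n+(b_n-1)/p$ and no spurious smaller subsequential limit is created at transition scales — and a short diagonal argument showing that an infimum which is only approached by the $\ell_n$ still appears as a genuine subsequential limit of $\frac{-\log_2 A_j(p)}{j}$, so that equality (and not merely $\le$) holds.
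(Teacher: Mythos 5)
Your first half is exactly the paper's argument: finiteness via Proposition~\ref{prop:removeS}, monotonicity and the Lipschitz bound from the two embeddings of Proposition~\ref{prop:embeddings}, and concavity from the interpolation inclusion of Proposition~\ref{prop:interpol}; nothing to add there. The realizability half shares the paper's two key ingredients --- the sparse affine block (place about $2^{(1-b)j}$ equal coefficients at scale $j$ to realize $u\mapsto a+bu$; this is Proposition~\ref{prop:sfplinear}) and the representation of $\sigma$ as a countable infimum of supporting lines at a dense set of differentiability points --- but combines the blocks differently. The paper superposes all blocks on every scale, $a=\sum_{n\ge1}2^{-n}a_n$, and must then calibrate the lifts $\epsilon_n$ (via $1-2^{-\epsilon_n}=1/n$) so that $\lVert a_n\rVert_{b_p^{s(p)}}^p\sim 2n/p$ and the series converges in $b_p^{s(p)}$ for every $p$, with separate arguments for $p\ge1$ and $0<p<1$, the reverse inequality coming from non-negativity of the coefficients. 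You instead interleave the blocks on disjoint infinite scale sets $J_n$, which eliminates all convergence and normalization issues and reduces the whole proof to your $\liminf$ identity. The obstacle you flag is real but resolves cleanly: with $N_j=\lceil 2^{(1-b_n)j}\rceil$ and exact magnitudes $2^{-a_nj}$ one gets
\begin{equation*}
\Bigl|\tfrac{-\log_2 A_j(p)}{j}-\bigl(a_n-\tfrac{1-b_n}{p}\bigr)\Bigr|\le \tfrac{1}{jp}
\end{equation*}
uniformly in $n$ (and the error is $0$ for $p=\infty$), so every scale's value lies within $1/(jp)$ of $\inf_n\bigl(a_n-\tfrac{1-b_n}{p}\bigr)=\sigma(1/p)-1/p>-\infty$, giving $\liminf\ge\inf$, while the exact limit along each $J_n$ gives $\liminf\le\inf$. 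So your route is correct and arguably more elementary than the paper's (no summability estimates, no case split on $p$), at the price of trading the paper's explicit two-sided norm bounds \eqref{eq:givemeanorm} for the $\liminf$ characterization of the critical exponent. One shared gloss worth noting: both you and the paper prescribe the sequence $a$ and pass back to $f$ via the Parseval frame, which is only immediate when the frame is an orthonormal basis (otherwise the analysis coefficients of the synthesized $f$ need not coincide with $a$); this is not a defect of your argument relative to the paper's.
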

    
    First of all, we observe that   the critical smoothness functions of Section \ref{sec:related} all satisfy the conditions of Theorem \ref{theo:main}. We exclude the case $f \in \ST$ thanks to Proposition \ref{prop:removeS}, infinitely smooth functions being the only ones for which the critical smoothness can be infinite. 
    
    We separate the proof of Theorem \ref{theo:main} in two parts. First, we use embeddings and interpolation theory of Besov spaces to demonstrate the properties of $s_f(p)$ in Section \ref{sec:properties}, where we also study the main properties of critical smoothness functions. Second, we demonstrate that the functions $s$ satisfying these properties are the critical smoothness functions of some $f \in \SpT$ in Section \ref{sec:constructf}.
    
    \subsection{Properties of $s_f(p)$}
    \label{sec:properties}
    
    \begin{proposition}[First part of Theorem \ref{theo:main} restated]
    \label{prop:sfp}
    Let $f \in \SpT$. Then, $s_f(p) = \infty$ for any $p>0$ if $f \in \ST$. Otherwise, 
    the function $\frac{1}{p} \mapsto s_f(p)$ is an increasing, concave, and $1$-Lipschitz function from $[0,\infty)$ to $\R$. 
    \end{proposition}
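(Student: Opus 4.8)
The plan is to work throughout with the variable $u = 1/p \in [0,\infty)$ and to set $\sigma(u) = s_f(p)$, so that the three claimed properties become: $\sigma$ is nondecreasing, concave, and $1$-Lipschitz on $[0,\infty)$ (recall $p=\infty$ corresponds to $u=0$ and $p \downarrow 0$ to $u \to \infty$). Before anything else I would record that $\sigma$ is real-valued: since $f \notin \ST$, Proposition \ref{prop:removeS} forbids $s_f(p) = \infty$, while the right-hand identity in \eqref{eq:SSpviaB} guarantees $f \in B_{p,q}^s(\T)$ for some $s$, hence $s_f(p) > -\infty$; thus $\sigma : [0,\infty) \to \R$. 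The one recurring device is: to show $s_f(p) \ge t$ it suffices to place $f$ in $B_{p,q}^{t'}(\T)$ for every $t' < t$ (for some $q$, freely chosen thanks to Proposition \ref{prop:removeq}) and then let $t' \uparrow t$.

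Monotonicity and the Lipschitz bound both come from Proposition \ref{prop:embeddings}. For monotonicity, fix $p_0 \le p_1$. Given any $s < s_f(p_1)$ we have $f \in B_{p_1,q}^{s}(\T)$, and the second embedding of Proposition \ref{prop:embeddings} yields $f \in B_{p_0,q}^{s-\epsilon}(\T)$ for every $\epsilon > 0$; letting $\epsilon \downarrow 0$ and $s \uparrow s_f(p_1)$ gives $s_f(p_0) \ge s_f(p_1)$, i.e.\ $\sigma$ is nondecreasing in $u$. For the increment bound, I would prove the one-sided estimate $s_f(p_0) - s_f(p_1) \le 1/p_0 - 1/p_1$ for $p_0 \le p_1$: given $t < s_f(p_0)$, writing $t = s + 1/p_0 - 1/p_1$ and applying the first embedding of Proposition \ref{prop:embeddings} places $f$ in $B_{p_1,q}^{t - (1/p_0 - 1/p_1) - \epsilon}(\T)$, whence $s_f(p_1) \ge s_f(p_0) - (1/p_0 - 1/p_1)$. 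Combined with monotonicity this is exactly $0 \le \sigma(u_0) - \sigma(u_1) \le u_0 - u_1$ for $u_0 \ge u_1$, which is the $1$-Lipschitz property.

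Concavity is the heart of the argument and rests on the interpolation result, Proposition \ref{prop:interpol}. Fix $u_0, u_1$ and $\lambda \in (0,1)$, and let $p$ be defined by $1/p = \lambda/p_0 + (1-\lambda)/p_1$, so the target is $u = \lambda u_0 + (1-\lambda) u_1$. Taking any $s_0 < s_f(p_0)$ and $s_1 < s_f(p_1)$, we have $f \in B_{p_0,q_0}^{s_0}(\T) \cap B_{p_1,q_1}^{s_1}(\T)$ with the auxiliary $q_0, q_1$ chosen at will (legitimate by Proposition \ref{prop:removeq}), and Proposition \ref{prop:interpol} then places $f$ in $B_{p,q}^{\lambda s_0 + (1-\lambda) s_1}(\T)$. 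Hence $s_f(p) \ge \lambda s_0 + (1-\lambda) s_1$, and taking suprema over $s_0 \uparrow s_f(p_0)$, $s_1 \uparrow s_f(p_1)$ yields $\sigma(u) \ge \lambda \sigma(u_0) + (1-\lambda)\sigma(u_1)$.

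The main obstacle is this concavity step — not the idea, which is immediate once interpolation is on the table, but the verification that the hypotheses of Proposition \ref{prop:interpol} hold uniformly as $s_0, s_1$ approach the critical values and as $p_0$ or $p_1$ reaches $\infty$ (so that $u_0 = 0$), together with a careful appeal to Proposition \ref{prop:removeq} to neutralize the coupling $1/q = \lambda/q_0 + (1-\lambda)/q_1$ on the secondary parameter. The residual bookkeeping — reading $1/\infty = 0$ at the endpoint $u = 0$, and continuity there, which is automatic from the $1$-Lipschitz bound just established — is routine.
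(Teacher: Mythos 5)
Your proposal is correct and follows essentially the same route as the paper: monotonicity and the $1$-Lipschitz bound from the two embeddings of Proposition~\ref{prop:embeddings}, and concavity from the interpolation inclusion of Proposition~\ref{prop:interpol}, with Proposition~\ref{prop:removeq} used to dispose of the secondary parameter $q$ (the paper simply fixes $q=p$ at the outset, which makes the coupling $\frac{1}{q}=\frac{\lambda}{q_0}+\frac{1-\lambda}{q_1}$ hold automatically). The only cosmetic difference is your explicit change of variables $u=1/p$.
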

    
    \begin{proof}
    The case $f \in \ST$ is covered by Proposition \ref{prop:removeS}, which also implies that $s_f(p) < \infty$ for any $f \notin \ST$ and any $0 < p \leq \infty$. We now assume that $f \notin \ST$.  Due to Proposition \ref{prop:removeq}, we can focus on Besov spaces of the form $B_{p,p}^s(\T)$, that we denote by $B_p^s(\T)$ to simplify, for the characterization of $s_f$. We fix $0 < p_1 \leq p_2 \leq \infty$. \\

    \textit{Monotonicity.} Let $s < s_f(p_2)$ so that $f \in B_{p_2}^{s}(\T)$. For any $\epsilon > 0$, we have the embedding $ B_{p_2}^{s}(\T) \subseteq B_{p_1}^{s-\epsilon}(\T)$, from which we deduce that $s_f(p_1) \geq s - \epsilon$. This is valid for any  $s < s_f(p_2)$ and $\epsilon > 0$, hence $s_f(p_1) \geq s_f(p_2)$. In other terms, $\frac{1}{p} \mapsto s_f(p)$ is increasing. \\
    
    \textit{Lipschitzness.} 
    The monotonicity implies that $
    0 \leq \frac{s_f(p_1) - s_f(p_2)}{\frac{1}{p_1} - \frac{1}{p_2}}. $
    Let $s < s_f(p_1)$ so that $f \in B_{p_1}^{s}(\T)$. For any $\epsilon > 0$, we have 
    \begin{equation}
    B_{p_1}^s(\T) \subseteq B_{p_2}^{s - \left(\frac{1}{p_1} - \frac{1}{p_2} \right)- \epsilon}(\T)
    \end{equation}
    due to Proposition \ref{prop:embeddings}. Hence, $s_f(p_2) \geq s - \left(\frac{1}{p_1} - \frac{1}{p_2} \right)- \epsilon$. This is true for any $\epsilon > 0$ and $s < s_f(p_1)$, therefore $s_f(p_2) \geq s_f(p_1)-\left(\frac{1}{p_1} - \frac{1}{p_2} \right)$, \emph{i.e.}, 
    \begin{equation}\label{eq:slope1}
    0 \leq \frac{s_f(p_1) - s_f(p_2)}{\frac{1}{p_1} - \frac{1}{p_2}} \leq 1.
    \end{equation}  

    \textit{Concavity.}
    Let $\lambda \in [0,1]$ and define $p_1 \leq p \leq p_2$ such that $\frac{1}{p} = \frac{\lambda}{p_1} + \frac{1- \lambda}{p_2}$. Fix $\epsilon > 0$ and $s < \lambda s_f(p_1) + (1-\lambda) s_f(p_2) - \epsilon$. By definition, $f \in B_{p_1}^{s_f(p_1)-\epsilon}(\T) \cap B_{p_2}^{s_f(p_2)-\epsilon}(\T)$, and by Proposition \ref{prop:interpol}, $ B_{p_1}^{s_f(p_1)-\epsilon}(\T) \cap B_{p_2}^{s_f(p_2)-\epsilon}(\T) \subseteq B_{p}^{s}(\T)$. 
    This shows that $f \in B_{p}^{s}(\T)$ hence $s_f(p) \geq s$. Choosing $\epsilon \rightarrow 0$ and $s \rightarrow \lambda s_f(p_1) + (1-\lambda) s_f(p_2)$ finally gives $s_f(p) \geq  \lambda s_f(p_1) + (1-\lambda) s_f(p_2)$; \emph{i.e.}, $\frac{1}{p} \mapsto s_f(p)$ is concave.
    \end{proof}
    
    We state some additional simple and useful properties of the critical smoothness function.
    For $\gamma \in \R$, we define the \textit{Sobolev operator}\footnote{See~\cite{fageot2020tv}, especially Section 3.0 for a discussion on periodic operators and Section 5.1 where the Sobolev operator is discussed.} $\Lop_\gamma = (\mathrm{Id} - \Delta)^{\gamma/2}$ acting on periodic functions as
    \begin{equation}
        (\mathrm{Id} - \Delta)^{\gamma/2} f = \sum_{k \in \Z} (1 + k^2)^{\gamma/2} \widehat{f}[k] e_k. 
    \end{equation}
    The effect of $\Lop_\gamma$ is to reduce the smooothness of a factor $\gamma$ (when $\gamma < 0$, it is more correct to say that the smoothness is increased by $-\gamma$). 
    
    \begin{proposition} \label{prop:simplepropsf}
    Let $f \in \SpT \backslash \ST$. 
    
            \begin{itemize}[label=\raisebox{0.30ex}{\tiny$\bullet$}] 
        
            \item For any $\gamma \in\R$, we have that, for all $0<p \leq \infty$,
            \begin{equation} \label{eq:withLopgamma}
                 s_{\Lop_\gamma f}(p)= s_f(p) - \gamma. 
            \end{equation}
           \item The function $\frac{1}{p} \mapsto s_f(p)$ is left and right differentiable at any point. 

    We denote by $\partial_+s_f(p_0)$ ($\partial_- s_f(p_0)$, resp.) the right (left, resp.) differential of $\frac{1}{p} \mapsto s_f(p)$ in $p_0 \in (0,\infty]$ ($p_0 \in (0,\infty)$, resp.). 
        \item For any $0 <p<\infty$, we have that $\partial_-s_f(p) \geq \partial_+ s_f(p)$. 
        \item There are at most countably many points where the function $\frac{1}{p} \mapsto s_f(p)$ is not differentiable.
         \item If $\partial_-s_f(p_0)= 1$, then for any  $p \geq p_0$, we have 
        $s_f(p) = s_f(p_0) + \frac{1}{p} - \frac{1}{p_0}$. 
        \item If $\partial_+s_f(p_0)= 0$, then for any  $p \leq p_0$, we have 
        $s_f(p) = s_f(p_0)$. 
        \item If $\partial_-s_f(p_0)= 1$ and $\partial_+s_f(p_0)= 0$, then for all $0< p \leq \infty$, we have
        \begin{equation} \label{eq:only01slopes}
            s_f(p) = s_f(p_0) - \frac{1}{p_0} + \frac{1}{\max(p,p_0)} = s_f(\infty) + \frac{1}{\max(p,p_0)}.
        \end{equation}
    \end{itemize}
    \end{proposition}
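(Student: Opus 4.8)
The plan is to prove the seven bullet points of Proposition~\ref{prop:simplepropsf} by leveraging the single structural fact established in Proposition~\ref{prop:sfp}: that $g : t \mapsto s_f(1/t)$ (writing $t = 1/p \in [0,\infty)$) is increasing, concave, and $1$-Lipschitz. Most of the statements are then purely about the elementary theory of concave functions of one real variable, and only the first and last require extra input from Besov-space theory.

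For the \emph{Sobolev-operator identity} \eqref{eq:withLopgamma}, I would argue that $\Lop_\gamma$ is an isomorphism $B_{p,q}^s(\T) \to B_{p,q}^{s-\gamma}(\T)$ for every admissible choice of parameters; this is the standard lifting property of the Bessel-potential operator $(\mathrm{Id}-\Delta)^{\gamma/2}$ on the torus. Granting this, $f \in B_{p,q}^s(\T)$ if and only if $\Lop_\gamma f \in B_{p,q}^{s-\gamma}(\T)$, so the set $\{s : f \in B_{p,q}^s\}$ is the $\gamma$-translate of $\{s : \Lop_\gamma f \in B_{p,q}^s\}$, and taking suprema yields $s_{\Lop_\gamma f}(p) = s_f(p)-\gamma$. (I would note $\Lop_\gamma f \notin \ST$ when $f \notin \ST$, so the hypothesis is preserved.) This is the one place where a genuine functional-analytic input beyond Proposition~\ref{prop:sfp} is needed, and I expect it to be the main obstacle only in the bookkeeping sense of citing or verifying the lifting property in the periodic wavelet-defined setting.

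The middle four bullets are standard facts about concave functions, transcribed through the change of variables $t=1/p$. A finite concave function on $[0,\infty)$ admits one-sided derivatives at every interior point (and a right derivative at $0$), giving left/right differentiability of $\frac{1}{p}\mapsto s_f(p)$. Concavity gives that the slope is nonincreasing, which is exactly $\partial_- s_f(p)\ge \partial_+ s_f(p)$ once one checks the sign conventions for the one-sided differentials in the variable $t=1/p$. The nonincreasing slope function has at most countably many jumps, which are precisely the points of non-differentiability, giving the countability claim. For each of these I would state the concave-function lemma and apply it, avoiding any recomputation.

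For the \emph{slope-rigidity} bullets, the key is to combine the $1$-Lipschitz bound and the increasing property (slopes lie in $[0,1]$, from \eqref{eq:slope1}) with concavity (slopes nonincreasing in $t$). If $\partial_- s_f(p_0)=1$, then since slopes cannot exceed $1$ and are nonincreasing as $t$ decreases past $1/p_0$, the slope must equal $1$ on all of $[0,1/p_0]$, i.e.\ for all $p\ge p_0$; integrating the constant slope $1$ gives $s_f(p)=s_f(p_0)+\frac1p-\frac1{p_0}$. Dually, if $\partial_+ s_f(p_0)=0$, then since slopes are nonnegative and nonincreasing, they vanish on $[1/p_0,\infty)$, i.e.\ for $p\le p_0$, so $s_f$ is constant there. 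The last bullet simply concatenates these two: to the right of $p_0$ the slope is $0$ and to the left it is $1$, so $\frac1p\mapsto s_f(p)$ is the piecewise-linear ``hockey-stick'' function, which one writes compactly as
\begin{equation}
    s_f(p) = s_f(p_0) - \frac{1}{p_0} + \frac{1}{\max(p,p_0)},
\end{equation}
and the second equality in \eqref{eq:only01slopes} follows by noting $s_f(\infty) = s_f(p_0)-\frac1{p_0}$ (the constant value for $p\ge p_0$). No genuine obstacle arises here beyond carefully tracking that ``increasing/nonincreasing'' refers to the variable $t=1/p$ rather than to $p$ itself.
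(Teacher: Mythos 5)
Your proposal is correct and follows essentially the same route as the paper: the lifting property of $\Lop_\gamma = (\mathrm{Id}-\Delta)^{\gamma/2}$ for the first bullet, and elementary facts about concave, increasing, $1$-Lipschitz functions of $t=1/p$ (one-sided derivatives, monotone slope function with countably many jumps, and the pinching of the slope between the concavity lower bound and the Lipschitz upper bound) for the remaining bullets. The only point worth flagging is that you correctly keep the sign conventions consistent with the statement (slopes nonincreasing in $t=1/p$, so they are forced to equal $1$ for $t<1/p_0$ and $0$ for $t>1/p_0$), which the paper's own proof states with the roles of $\partial_+$ and $\partial_-$ inadvertently swapped.
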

    
    \begin{proof}
        \begin{itemize}[label=\raisebox{0.30ex}{\tiny$\bullet$}]
        \item The operator $\Lop_\gamma$ is a continuous bijection from $B_{p+\gamma}^s(\T)$ to $B_p^{s}$  for any $\gamma \in \R$, $s \in \R$, and $0<p \leq \infty$~\cite[Section 2.3.8]{Triebel2010theory}\footnote{Hans Triebel details the properties of $\Lop_\gamma$ (denoted by $I_\gamma$) for tempered generalized functions, but the result is easily adapted to periodic generalized functions.}. This directly implies the desired relation on the critical smoothness functions of $f$ and $\Lop_\gamma f$. 
    
   \item   By concavity, $\frac{1}{p} \mapsto \frac{ s_f(p) - s_f(p_0) }{\frac{1}{p} - \frac{1}{p_0}}$ decreases when $\frac{1}{p}$ decreases to $\frac{1}{p_0}$. It is moreover bounded below by $0$, and therefore admits a limit when $\frac{1}{p} \rightarrow \left( \frac{1}{p_0}\right)^+$, which is the right differentiable at $\frac{1}{p_0}$. 
    We proceed similarly for $p_0 \in (0,\infty)$ to show the left differentiability (we exclude $p_0 = \infty$, corresponding to $\frac{1}{p_0} = 0$ in this case). 
    
    \item The concavity directly implies that the left derivative is bigger than the right derivative at any point.
    
    \item The function $\frac{1}{p} \mapsto \partial_+ s_f(p)$ is decreasing (because $\frac{1}{p} \mapsto s_f(p)$ is concave). According to the Darbou--Froda theorem~\cite[Theorem 4.30 p. 96]{rudin1964principles}, it is therefore continuous at every point except possibly countably many. The same holds for $\frac{1}{p} \mapsto \partial_- s_f(p)$, therefore these two functions are equal except possibly at finitely many points, as expected.
    
  \item   Assume that $\partial_+s_f(p_0)= 1$, then the slopes of $\frac{1}{p} \mapsto s_f(p)$ are smaller than $1$ ($1$-Lipschitzness) and greater than $\partial_+s_f(p_0) = 1$ for $p \geq p_0$ by concavity. Hence, we have that $\frac{s_f(p) - s_f(p_0)}{\frac{1}{p} - \frac{1}{p_0}} = 1$ for any $p > p_0$ and the result follow. A similar argument gives that $s_f(p) = s_f(p_0)$ for $p \leq p_0$ if $\partial_- s_f(p_0)=0$. 
    
    \item Finally, \eqref{eq:only01slopes} follows from the two previous cases.
        \end{itemize}
    \end{proof}

    \textit{Remark.} The critical smoothness function \eqref{eq:only01slopes} can be compared with the ones presented in Section \ref{sec:related}. The form of the encountered critical smoothness functions can all be put in this form (possibly with $p_0 = \infty$). We shall see in the next section that other critical smoothness functions can be created.

    \begin{proposition} \label{prop:someproperties}
    Let $f, g \in \SpT$ and $\lambda \neq 0$.
        \begin{itemize}[label=\raisebox{0.30ex}{\tiny$\bullet$}]
        \item For any $0 < p \leq \infty$, $s_{\lambda f}(p) = s_f(p)$.
        \item If $f$ and $g$ are such that $s_f(p) \neq s_g(p)$ for some $0 < p \leq \infty$, then $s_{f+g}(p) = \min(s_f(p) , s_g(p))$.
        \item If $f$ and $g$ are such that  $s_f(p) = s_g(p)$ 
        for at most countably many  $0 < p \leq \infty$
        , then $s_{f+g} = \min(s_f, s_g)$.
    \end{itemize}
    \end{proposition}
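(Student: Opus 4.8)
Let me sketch proofs for the three bullet points.

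The first bullet is immediate: for $\lambda \neq 0$, the map $h \mapsto \lambda h$ is a bijection of each Besov space $B_{p,q}^s(\T)$ onto itself (it simply scales the quasi-norm by $|\lambda|$), so $f \in B_{p,q}^s(\T) \iff \lambda f \in B_{p,q}^s(\T)$, and hence $s_{\lambda f}(p) = s_f(p)$.

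For the second bullet, fix $p$ with $s_f(p) \neq s_g(p)$, say $s_f(p) < s_g(p)$ without loss of generality, and set $m = \min(s_f(p), s_g(p)) = s_f(p)$. The plan is to prove the two inequalities separately.

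Let me outline the two directions...

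For the lower bound, I'd pick any $s < m = s_f(p) < s_g(p)$, so both $f, g \in B_p^s(\T)$; since Besov spaces are (quasi-)vector spaces, $f+g \in B_p^s(\T)$, giving $s_{f+g}(p) \geq s$ for all $s < m$, hence $s_{f+g}(p) \geq m$. For the upper bound, I'd use that $s_f(p) < s_g(p)$ strictly: pick $s$ with $s_f(p) < s < s_g(p)$. Then $g \in B_p^s(\T)$ but $f \notin B_p^s(\T)$. If we had $f + g \in B_p^s(\T)$, then $f = (f+g) - g \in B_p^s(\T)$ (again using the vector space structure), a contradiction. Hence $f+g \notin B_p^s(\T)$, so $s_{f+g}(p) \leq s$ for every $s > s_f(p)$, giving $s_{f+g}(p) \leq s_f(p) = m$. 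Combining the two bounds yields $s_{f+g}(p) = \min(s_f(p), s_g(p))$.

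For the third bullet, the idea is to apply the second bullet pointwise and then invoke continuity. By hypothesis, $s_f(p) = s_g(p)$ for at most countably many $p$, so on the co-countable complement the second bullet gives $s_{f+g}(p) = \min(s_f(p), s_g(p))$. The remaining step is to upgrade this from a co-countable set to all of $(0,\infty]$. Here I would use that all three functions $\frac{1}{p} \mapsto s_f(p)$, $\frac{1}{p} \mapsto s_g(p)$, and $\frac{1}{p} \mapsto s_{f+g}(p)$ are concave on $[0,\infty)$ by Proposition \ref{prop:sfp} (assuming none of $f, g, f+g$ is in $\ST$; the degenerate cases where a critical function is identically $+\infty$ must be handled separately, but then the minimum is governed by the finite one and the claim is straightforward). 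Concave finite functions on $[0,\infty)$ are continuous on the open interval, and $\min$ of two continuous functions is continuous, so both sides of the claimed identity are continuous in $\frac{1}{p}$ on $(0,\infty)$; agreeing on a dense (co-countable) set, they agree everywhere on $(0,\infty)$, and the endpoint $p = \infty$ (i.e. $\frac{1}{p}=0$) follows by taking the limit using continuity of concave functions at the left endpoint of their domain.

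The main obstacle is the third bullet's boundary and continuity bookkeeping: one must ensure the concavity/continuity argument is valid at $\frac{1}{p} = 0$ and carefully treat the cases where $f$, $g$, or $f+g$ lies in $\ST$ (infinite critical smoothness), since Proposition \ref{prop:sfp} only gives concavity for generalized functions outside $\ST$. Everything else reduces to the elementary vector-space manipulations of the second bullet.
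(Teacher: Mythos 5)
Your proposal is correct and follows essentially the same route as the paper: linearity of $B_p^s(\T)$ for the first two bullets (including the same ``subtract to get a contradiction'' step for the upper bound), and agreement on a co-countable dense set plus continuity of the critical smoothness functions for the third. Your treatment is in fact a bit more careful than the paper's (which swaps the roles of $f$ and $g$ in a typo and handles the $\ST$ degeneracies implicitly); just note that continuity at the endpoint $\frac{1}{p}=0$ comes from the $1$-Lipschitz bound of Proposition \ref{prop:sfp} rather than from concavity alone, since concave functions may be discontinuous at the boundary of their domain.
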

    \begin{proof}
    The first point is obvious because $B_{p}^s(\T)$ is linear. For the second, assume for instance that $s_f(p) < s_g(p)$ and let $s <   s_f(p)$. Then $f$ and $g$ are in the linear space $B_p^s(\T)$, so is the sum $f+g$. This shows that $s_{f+g}(p) \geq s_f(p)$. Assume now that $s_f(p) < s < s_g(p)$, then $f \in B_p^s(\T)$ and $g \notin B_p^s(\T)$. Therefore, $f+g \notin B_p^s(\T)$ (otherwise, $g= (g+f) - f$ would be in $B_p^s(\T)$, what we excluded). Hence, $s_{f+g}(p) \leq s_f(p)$. Finally, we have shown that $s_{f+g}(p) = s_f(p) =  \min(s_f(p) , s_g(p))$. For the last point, we have that $s_{f+g}(p) =  \min(s_f(p) , s_g(p))$ for any $p$ such that $s_f(p) \neq s_g(p)$. In particular, the  functions $s_{f+g}$ and $\min(s_f , s_g)$ coincide, expect possibly over a countable set. The two functions being continuous, we deduce that they are equal. 
    \end{proof}

    \subsection{Characterizing all the Critical Smoothness Functions}
    \label{sec:constructf}
    
    So far, we only encountered critical smoothness functions with left or right differential being $0$ or $1$. We start with the construction of a a function with affine critical smoothness function with any possible slope between $0$ and $1$. 
    
    \begin{proposition} \label{prop:sfplinear}
    Let $s_0 \in \R$ and $0 \leq \alpha_0 \leq 1$. There exists a function $f \in \SpT$ such that 
    \begin{equation}\label{eq:sfpalpha}
        s_f(p) = s_0 + \frac{\alpha_0}{p}, \quad \forall 0 < p \leq \infty. 
    \end{equation}
    \end{proposition}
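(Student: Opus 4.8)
The plan is to bypass any direct analytic construction and instead work entirely on the wavelet side, using the equivalence \eqref{eq:equivalencefanda} for the fixed $(0,\infty)$-admissible Parseval frame $\bm{\Psi}$. Since $f \in B_{p,q}^s(\T) \Leftrightarrow a \in b_{p,q}^s$ with $a_{j,k} = 2^{j/2}\langle f,\Psi_{j,k}\rangle$, it suffices to hand-craft a sequence $a=(a_{j,k})$ with the prescribed critical exponent and then recover $f$ by frame reconstruction, $f = \sum_{j\geq 0,\,k\in X_j} 2^{-j/2} a_{j,k}\Psi_{j,k}$. Because $q$ is irrelevant by \cref{prop:removeq}, I would take $q=p$, for which the sequence norm \eqref{eq:sequencenorm} collapses to $\lVert a\rVert_{b_{p,p}^s}^p = \sum_{j\geq 0} 2^{j(sp-1)}\sum_{k\in X_j}|a_{j,k}|^p$. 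A lacunary choice that places $n_j$ equal coefficients of common magnitude $c_j$ at each scale then reduces the membership question to the convergence of a single geometric-type series.

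Concretely, I would set $c_j = 2^{-s_0 j}$ and activate $n_j = \lceil 2^{(1-\alpha_0)j}\rceil$ arbitrary indices $Y_j \subset X_j$ at scale $j$ (all other coefficients zero), with $a_{j,k}=c_j$ for $k\in Y_j$. This is legitimate since $0\leq \alpha_0\leq 1$ gives $n_j \asymp 2^{(1-\alpha_0)j} \leq 2^j \leq \#X_j$ for all large $j$ by \eqref{eq:XjC}, so there are always enough indices (the finitely many small scales are irrelevant below). Substituting into the collapsed norm yields $\lVert a\rVert_{b_{p,p}^s}^p \asymp \sum_{j} 2^{j(1-\alpha_0)}\,2^{j(sp-1-s_0 p)} = \sum_{j} 2^{jp(s - s_0 - \alpha_0/p)}$, a geometric series that converges precisely when $s < s_0 + \alpha_0/p$ and diverges when $s \geq s_0 + \alpha_0/p$. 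For $p=\infty$ the norm instead reads $\sup_j 2^{js}c_j = \sup_j 2^{j(s-s_0)}$, which is finite exactly for $s\leq s_0$, matching $s_0 + \alpha_0/p$ in the limit $p\to\infty$ (where $\alpha_0/p = 0$). To confirm $f$ is a genuine element of $\SpT$, I would note that choosing $p=q=2$ and any $s < s_0 + \alpha_0/2$ gives $a \in b_{2,2}^s$, hence $f \in B_{2,2}^s(\T)\subset \SpT$ and the reconstruction sum converges. Combining these computations with \eqref{eq:equivalencefanda} gives $s_f(p) = \sup\{s : a\in b_{p,p}^s\} = s_0 + \alpha_0/p$ for every $0 < p \leq \infty$; note this works uniformly for all $s_0\in\R$, including negative values where the $c_j$ grow.

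The only genuinely delicate point is the bookkeeping at the convergence boundary: I must verify that replacing the idealized count $2^{(1-\alpha_0)j}$ by the integer $n_j=\lceil 2^{(1-\alpha_0)j}\rceil$, together with the (quasi-)norm equivalence constants implicit in \eqref{eq:equivalencefanda}, perturbs the series only by a bounded multiplicative factor and therefore does not shift the critical exponent. This holds because $1 \leq n_j/2^{(1-\alpha_0)j} \leq 2$, so the $\asymp$ above is honest and the boundary $s = s_0 + \alpha_0/p$ is preserved. Everything else is the routine evaluation of a geometric series and an appeal to \cref{prop:removeS} to place $f$ outside $\ST$ (automatic since $s_f(p)<\infty$).
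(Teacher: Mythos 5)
Your proof is correct and follows essentially the same route as the paper: a lacunary wavelet construction with roughly $2^{(1-\alpha_0)j}$ equal nonzero coefficients at scale $j$, reducing membership in $B_{p,p}^s(\T)$ to the convergence of a geometric series. The only differences are cosmetic: you absorb the shift $s_0$ into the coefficient magnitudes $c_j = 2^{-s_0 j}$ and treat the endpoints $\alpha_0 \in \{0,1\}$ within the same construction, whereas the paper normalizes to $s_0 = 0$ and lifts via $\Lop_\gamma$, dispatching the endpoint cases to the Brownian motion and the Dirac impulse.
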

    
    This result is, strictly speaking, not new. We have for instance seen in Section \ref{sec:related} that~\cite{bochkina2013besov} considers stochastic processes whose critical smoothness function satisfies~\eqref{eq:sfpalpha}. Thereafter, we provide an elementary proof of Proposition~\ref{prop:sfplinear} both for the sake of completeness and because we want to consider any possible values of the parameters, including $s_0 < 0$ and $p<1$ (which are more rarely considered in the literature). 
    
    \begin{proof}[Proof of Proposition~\ref{prop:sfplinear}]
    The parameter $\alpha_0$ being fixed, it suffices to construct a function such that \eqref{eq:sfpalpha} holds for some $s_0$ to ensure that one can construct $f$ for any other value of $s_0$, thanks to \eqref{eq:withLopgamma}. Note moreover that we may restrict our attention to $p < \infty$, the case $p = \infty$ being deduced by continuity ($\frac{1}{p} \mapsto s_f(p)$ is continuous at $0$, corresponding to $p = \infty$). 
    
    The cases $\alpha_0 = 0$ and $\alpha_0 = 1$ are already covered by known examples. For $\alpha_0 = 0$, we know that $s_B(p) = \frac{1}{2}$ (a.s.) for the Brownian motion $B$. For $\alpha_0 = 1$, we know that $s_{\delta}(p) = \frac{1}{p} - 1$. One can therefore assume that $\alpha_0 \in(0,1)$.
    
    Let $K > 1$ such that $\alpha_0 = 1 - \frac{1}{K}$. We construct the function $f$ from its wavelet coefficients. At a given scale $j \geq 0$, we impose that, among the $2^j$ wavelet coefficients of $f$, $2^{\lfloor j/K \rfloor} \leq 2^j$ are equal to $2^{j/2}$ while the others are $0$. In particular, this implies that, for any scale $j \geq 0$, 
    \begin{equation} \label{eq:upperlowerf}
       \frac{1}{2}
       2^{j \left( \frac{p}{2} + \frac{1}{K}\right) } \leq  \sum_{k=0}^{2^{j}-1} |\langle f , \psi_{j,k} \rangle|^p = 2^{j \frac{p}{2}} 2^{ \lfloor j/K \rfloor} \leq 2 \cdot 2^{j \left( \frac{p}{2} + \frac{1}{K}\right) } . 
    \end{equation}
    In particular, the Besov (quasi-)norm of $f$, given by \eqref{eq:besovnorm}, satisfies
    \begin{equation}
      \frac{1}{2} \sum_{j\geq 0} 2^{j \left( s p - 1 + \frac{1}{K} \right) }  \leq \lVert f \rVert_{B_p^s}^p   \leq 2 \sum_{j\geq 0} 2^{j \left( s p - 1 + \frac{1}{K} \right)}. 
    \end{equation}
    This means that $f \in B_p^s(\T)$ if and only if $s p - 1   + \frac{1}{K} < 0$, \emph{i.e.}, if and only if $s < \frac{1}{p} \left( 1 - \frac{1}{K} \right) = \frac{\alpha_0}{p}$, and we have, in this case,
    \begin{equation}\label{eq:givemeanorm}
     \frac{1}{2} \frac{1}{1 - 2^{sp - \alpha_0}} \leq \lVert f \rVert_{B_p^s}^p   \leq   \frac{2}{1 - 2^{sp - \alpha_0}}.
    \end{equation}
    This shows that $s_f(p) = \frac{\alpha_0}{p}$, as expected. 
    \end{proof}
    
    The next proposition is dedicated to the construction of a generalized function $f$ with a given critical smoothness function. We demonstrate the conditions of Proposition \ref{prop:sfp} are sufficient to ensures that $p \mapsto s(p)$ is the critical smoothness function of some generalized function. This covers functions such as
    \begin{equation}
        \frac{1}{p} \mapsto \log \left( \frac{1}{p} + 1 \right), \quad \text{and} \quad \frac{1}{p} \mapsto    \left( \frac{1}{p}+1  \right)^{\beta},
    \end{equation}
    for any $\beta \in (0,1)$, which are clearly $1$-Lipschitz, concave, and increasing with respect to $\frac{1}{p}$.
    
    \begin{proposition}[Second part of Theorem \ref{theo:main} restated]
    \label{prop:theomain2}
For any function $s : (0,\infty] \rightarrow \R$ such that $\frac{1}{p}\mapsto s(p)$ is increasing, concave, and $1$-Lipschitz, there exists a periodic generalized function $f \in \SpT$ such that $s_f(p) = s(p)$.
    \end{proposition}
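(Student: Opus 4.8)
The plan is to combine two ingredients that are already in place: Proposition~\ref{prop:sfplinear}, which realizes \emph{any} affine function of $1/p$ with slope in $[0,1]$ as a critical smoothness function, and the elementary fact that a concave function is the pointwise infimum of its supporting lines. So I would first write the target as a countable infimum of admissible affine functions, then glue the corresponding building blocks into a single $f$ in a way that reproduces the infimum \emph{simultaneously for all $p$}. Concretely, I pass to the variable $x = 1/p \in [0,\infty)$ and set $\tilde s(x) = s(p)$, which by hypothesis is increasing, concave, and $1$-Lipschitz, hence continuous with one-sided slopes in $[0,1]$. At each $x_0$ concavity provides a supporting line $\ell_{x_0}(x) = \tilde s(x_0) + m_{x_0}(x - x_0)$ with $m_{x_0}\in[0,1]$, satisfying $\tilde s \le \ell_{x_0}$ and $\tilde s(x_0) = \ell_{x_0}(x_0)$. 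Taking a countable dense set $D = \{x_n\}_{n\ge1}\subset[0,\infty)$ and using $\lvert \ell_{x_0}(x) - \tilde s(x)\rvert \le 2\lvert x_0 - x\rvert$ (from $1$-Lipschitzness and $m_{x_0}\le1$), I get $\tilde s(x) = \inf_n \ell_n(x)$ with $\ell_n(x) = \alpha_n x + \beta_n$, $\alpha_n = m_{x_n}\in[0,1]$.

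For each $n$, Proposition~\ref{prop:sfplinear} (with $s_0 = \beta_n$ and $\alpha_0 = \alpha_n$) furnishes a generalized function realizing $\ell_n(1/p) = \beta_n + \alpha_n/p$; what I actually use is its wavelet coefficient pattern. The key device is to place these patterns on \emph{disjoint scale blocks}: I fix a partition $\N = \bigsqcup_{n\ge1} S_n$ into pairwise disjoint \emph{infinite} sets, and I define $f$ through coefficients $a_{j,k} = a^{(n)}_{j,k}$ for $j\in S_n$, $k\in X_j$. Following the construction behind Proposition~\ref{prop:sfplinear}, at each scale $j\in S_n$ I place about $2^{j(1-\alpha_n)}$ nonzero coefficients, each of size $2^{-j\beta_n}$ in the normalization $a_{j,k} = 2^{j/2}\langle f,\Psi_{j,k}\rangle$ (there is room since $\alpha_n\ge0$ gives $2^{j(1-\alpha_n)}\le 2^j \le \#X_j$). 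An elementary count as in \eqref{eq:upperlowerf} then gives $\sum_{k\in X_j}\lvert a_{j,k}\rvert^p \approx 2^{j(1-\alpha_n - p\beta_n)}$, so the scale-$j$ contribution to the sequence norm \eqref{eq:sequencenorm} with $q=p$ is comparable to $2^{jp(s-\ell_n(1/p))}$, a geometric profile with threshold exactly at $s = \ell_n(1/p)$.

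Because the scale blocks are disjoint, the pieces do not interact, and for every $0<p<\infty$, $s\in\R$,
\[
 \lVert a \rVert_{b_{p,p}^s}^p = \sum_{n\ge1}\ \sum_{j\in S_n} 2^{j(sp-1)} \sum_{k\in X_j} \lvert a^{(n)}_{j,k}\rvert^p .
\]
For fixed $n$, the inner double sum is, up to factors bounded above and below, $\sum_{j\in S_n} 2^{jp(s-\ell_n(1/p))}$; since $S_n$ is infinite, this is finite exactly when $s < \ell_n(1/p)$ and infinite otherwise. Hence $\lVert a\rVert_{b_{p,p}^s}<\infty$ if and only if $s<\ell_n(1/p)$ for \emph{every} $n$, i.e.\ if and only if $s < \inf_n \ell_n(1/p) = \tilde s(1/p) = s(p)$. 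By the equivalence \eqref{eq:equivalencefanda} this yields $s_f(p) = s(p)$ for all $0<p<\infty$, and the value at $p=\infty$ follows from the continuity at $1/p=0$ of both $p\mapsto s_f(p)$ (Proposition~\ref{prop:sfp}) and $s$. Finiteness of $\lVert a\rVert_{b_{p,p}^s}$ for some $(p,s)$ shows $f\in\SpT$, and $s_f(p) = s(p)<\infty$ gives $f\notin\ST$ via Proposition~\ref{prop:removeS}.

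The main obstacle is precisely this gluing step: the infimum of the $\ell_n$ must be reproduced \emph{exactly and for all $p$ at once} by a single $f$. A naive superposition $\sum_n c_n g_n$ runs into trouble, because a single choice of weights $c_n$ would have to control the $B_p^s$-(quasi-)norms of all pieces for every $p\in(0,\infty]$ simultaneously, and the upper bound $s_f(p)\le \inf_n s_{g_n}(p)$ is awkward since several pieces may simultaneously fail to lie in $B_p^s$ for a given $s$. Placing the pieces on disjoint scales removes both difficulties: Besov membership factors into independent conditions, one per affine piece, so the infimum is recovered cleanly in both directions without any tracking of constants.
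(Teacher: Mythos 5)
Your proposal is correct, and while it opens with the same two ingredients as the paper (realize each supporting line of the concave function $\frac{1}{p}\mapsto s(p)$ via the affine construction of Proposition~\ref{prop:sfplinear}, then recover $s(p)=\inf_n \ell_n(1/p)$ over a countable dense family of tangency points), the gluing step is genuinely different. The paper forms the weighted superposition $a=\sum_{n\geq 1}2^{-n}a_n$ with every $a_n$ occupying \emph{all} scales; this forces a delicate normalization (the condition $1-2^{-\epsilon_n}=\frac{1}{n}$ so that $\lVert a_n\rVert_{b_p^{s(p)}}^p\sim 2n/p$ is tamed by the weights $2^{-n}$ uniformly in $p$), a case split between $p\geq 1$ (absolute convergence in a Banach space) and $0<p<1$ ($p$-th power summability in a quasi-Banach space), and an appeal to the non-negativity of all coefficients to get the upper bound $s_a(p)\leq\inf_n s_{a_n}(p)$. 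Your device of placing the $n$-th coefficient pattern on a dedicated infinite set of scales $S_n$ makes membership in $b_{p,p}^s$ decouple exactly: the lower bound on $s_a(p)$ follows from the uniform geometric bound $2^{jp(s-\ell_{n(j)}(1/p))}\leq 2^{-jp(s(p)-s)}$ summed over all $j\geq 0$, and the upper bound is immediate because the sub-series over $j\in S_{n_0}$ has infinitely many terms bounded below by a constant once $s\geq\ell_{n_0}(1/p)$. This removes the normalization, the $p\geq 1$ versus $p<1$ dichotomy, and the positivity argument all at once, at the mild cost that the building blocks are no longer the functions of Proposition~\ref{prop:sfplinear} themselves but scale-lacunary versions of their coefficient patterns (so the construction is no longer an instance of the ``sum of functions'' heuristic behind Proposition~\ref{prop:someproperties}). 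Two cosmetic points: your ``if and only if $s<\ell_n(1/p)$ for every $n$'' silently identifies this with $s<\inf_n\ell_n(1/p)$, which can fail exactly at $s=s(p)$ when the infimum is not attained --- harmless here, since determining $s_a(p)$ as a supremum only uses the two strict-inequality implications, but worth phrasing as two one-sided claims; and, like the paper, you should take $\bm{\Psi}$ to be a $(0,\infty)$-admissible orthonormal basis rather than a redundant frame so that every coefficient sequence is actually realized by some $f\in\SpT$.
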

    
    \begin{proof}
    Using the isometric isomorphism between periodic Besov spaces and sequence spaces,especially~\eqref{eq:equivalencefanda}, it suffices to construct a Besov sequence $a$ such that $s_a(p) = s(p)$ for any $0< p \leq \infty$. We will therefore directly work on Besov sequences. Note moreover that we may restrict to $p < \infty$, the case $p = \infty$ always following by continuity of the critical smoothness functions at $\frac{1}{p} = 0$. 
    
    We consider sequences $(s_n)_{n\geq 1}$ and $(\alpha_n)_{n\geq 1}$ such that (i) for every $n \geq 1$ and $0<p \leq \infty$, $s_n \in \R$, $\alpha_n \in [0,1]$,  
        \begin{equation}
        \label{eq:snalphansp}
       s_n + \frac{\alpha_n}{p} > s(p) \quad \text{and} \quad  s(p) = \inf_{n\geq 1} \left( s_n + \frac{\alpha_n}{p} \right),
       \end{equation}
     and (ii) requiring that $1 - 2^{- \epsilon_n} = \frac{1}{n}$, where $\epsilon_n > 0$ is given by
        \begin{equation} \label{eq:epsn}
            \epsilon_n= \inf_{0 < p \leq \infty} s_{a_n}(p) - s(p) = \inf_{0 < p \leq \infty} s_n + \frac{\alpha_n}{p} - s(p).
        \end{equation}
       This gives constraints on the numbers $\alpha_n$ and $s_n$, that can be constructed as follows. First, we consider a sequence $(p_n)_{n\geq 1}$ which is dense in $(0,\infty)$ and such that no $\frac{1}{p_n}$ is a point of non-differentiability of $\frac{1}{p} \mapsto s(p)$ (this is possible because $\frac{1}{p} \mapsto s(p)$ has at most countably many points of non-differentiability, as shown in the proof of Proposition~\ref{prop:simplepropsf}).
        Then, $a_n$ is chosen as the slope of $\frac{1}{p} \mapsto s_f(p)$ at $p = p_n$ an $s_n$ is such that $s_n + \frac{a_n}{p_n} = s_f(p_n) + \epsilon_n$.
        Then, using that $\frac{1}{p}\mapsto s(p)$ is concave, it is below its tangents and therefore the right relation in \eqref{eq:snalphansp} holds for any $0< p \leq \infty$. Finally, the left relation in \eqref{eq:snalphansp} follows from the fact that $\epsilon_n \rightarrow 0$.
        
        The sequences $(s_n)_{n\geq 1}$ and $(\alpha_n)_{n \geq 1}$ being constructed, we define for each $n \geq 1$ the Besov sequence $a_n$ as in the proof of Proposition \ref{prop:sfplinear} (via the isometric isomorphism  between periodic and sequence Besov spaces). In particular, due to \eqref{eq:givemeanorm}, we have that 
        \begin{equation}\label{eq:willbeuseful}
       \frac{1}{2}  \frac{1}{( 1 - 2^{p ( s - s_{a_n}(p)  ) })} 
       \leq  \lVert a_n \rVert_{b_{p}^s}^p 
       \leq
         \frac{2}{( 1 - 2^{p ( s - s_{a_n}(p)  ) })} . 
        \end{equation}
        Moreover, using that $\epsilon_n \leq s_{a_n}(p) - s(p)$, we deduce from \eqref{eq:willbeuseful} that 
        \begin{equation}
            \label{eq:usefulalso}
            \lVert a_n \rVert_{b_{p}^s}^p  \leq  \frac{2}{( 1 - 2^{- p \epsilon_n })} = \frac{2}{ 1 - \left( 1 - \frac{1}{n} \right)^p } \underset{n \rightarrow \infty}{\sim} \frac{2n}{p}.
        \end{equation}
        
        We will show thereafter that $a = \sum_{n\geq 1}\frac{1}{2^n} a_n$ is a Besov sequence such that $s_a(p) = s(p) = \inf_{n\geq 1} s_{a_n}(p)$ for every $0<p < \infty$. 
  
     (i) We first show that $s_a(p) \geq \inf_{n\geq 1} s_{a_n}(p)$ by demonstrating that $\lVert a \rVert_{b_p^{s(p)}}^p< \infty$, where we recall that $s(p)$ satisfies \eqref{eq:snalphansp}. We distinguish the proof for $p \geq 1$ and $0<p<1$. Assume therefore that $p\geq 1$. We will show that 
    \begin{equation} \label{eq:showsomething}
        \sum_{n\geq 1} \frac{1}{2^n} \lVert a_n \rVert_{b_p^{s(p)}}  <  \infty. 
    \end{equation}
    In particular, this implies that the series is absolute convergent in the Banach space $b_p^{s(p)}$, and therefore converges in this space to a limit $a \in b_p^{s(p)}$~\cite[p. 11]{fabian2011banach}, hence $a$ is a well-defined Banach sequence.
    We now prove \eqref{eq:showsomething}. We have
    \begin{equation} \label{eq:pifpaf}
        \sum_{n\geq 1} \frac{1}{2^n} \lVert a_n \rVert_{b_p^s} 
        \leq
        \sum_{n\geq 1} \frac{1}{2^n} \left( \frac{2}{1 - \left( 1 - \frac{1}{n} \right)^p} \right)^{1/p}
        =         \sum_{n\geq 1} \frac{1}{2^n} \left( \frac{2}{1 - \left( 1 - \frac{1}{n} \right)^p} \right)^{1/p},
    \end{equation}
    where we used \eqref{eq:usefulalso}. The summand in \eqref{eq:pifpaf} is asymptotically equivalent to $\frac{1}{2^n} \left( \frac{ 2 n}{p}\right)^{1/p}$, which clearly defines a convergent series. This proves that $ \sum_{n\geq 1} \frac{1}{2^n} \lVert a_n \rVert_{b_p^s}  < \infty$.

    Assume now that $0 < p < 1$. It suffices to show that $\sum_{n\geq 1} \frac{1}{2^n}^p \lVert a_n \rVert_{b_p^{s(p)}}^p < \infty$ to deduce that $a$ lies in the quasi-Banach (and therefore complete) space $b_p^{s(p)}$ (note that we consider a quantity different from \eqref{eq:showsomething}). 
    Then, \eqref{eq:usefulalso} implies that 
     \begin{equation} \label{eq:ploufplouf}
        \sum_{n\geq 1} \left(\frac{1}{2^n}\right)^p \lVert a_n \rVert_{b_p^{s(p)}}^p \leq \sum_{n\geq 1}
        \frac{1}{2^{np}} \frac{ 2 }{ 1 - \left( 1 - \frac{1}{n} \right)^p}.
    \end{equation}   
    The summand in~\eqref{eq:ploufplouf} is asymptotically equivalent to $\frac{2n}{p 2^{np}}$, which is the term of a convergent series, hence $ \sum_{n\geq 1} \frac{1}{2^{np}} \lVert a_n \rVert_{b_p^{s(p)}}^p  < \infty$, 
 and therefore $s_a(p) \geq s(p)$. 
 
     (ii) We then show that $s_a(p) \leq \inf_{n\geq 1} s_{a_n}(p)$. Fix $s > \inf_{n\geq 1} s_{a_n}(p)$, so that there exists $n_0 \geq 1$ such that $\lVert a_{n_0} \rVert_{b_p^s} = \infty$. Then, for each $j \geq 0$, using that all the sequence coefficients are non-negative, we have that $\lVert \sum_{n\geq 1} \frac{1}{2^n} a_n^j \rVert_p^p \geq \lVert b_{n_0} a_{n_0}^j \rVert_p^p$. Hence, 
    \begin{equation}
        \lVert a \rVert_{b_p^s}^p = \sum_{j \geq 0} 2^{j (sp -1)} \left\lVert \sum_{n\geq 1} \frac{1}{2^n} a_n^j \right \rVert_p^p  
        \geq \sum_{j \geq 0} 2^{j (sp -1)} \left\lVert  b_{n_0} a_{n_0}^j  \right \rVert_p^p  
        = b_{n_0}^p \lVert a_{n_0} \rVert_{b_p^s}^p = \infty,
    \end{equation}
    which implies that $a \notin b_p^s$. This is true for any $s > \inf_{n\geq 1} s_{a_n}(p)$, therefore $s_a(p) \leq \inf_{n\geq 1} s_{a_n}(p)$.
    Finally, we have shown that 
    \begin{equation}
        s_a(p) = \inf_{n \geq 1} s_{a_n}(p) = s(p),
    \end{equation}
    completing the proof.

    \end{proof}

\textit{Remark.} The proof of Proposition \ref{prop:theomain2} is based on the idea that an infinite sum $a = \sum_{n\geq 1} \frac{a_n}{2^n} $ has excellent chances to satisfy $s_a(p) = \inf_{n\geq 1} s_{a_n}(p)$. This is the infinite generalization of the third point of Proposition~\ref{prop:someproperties} and the main difficulty is to correctly normalize the sum (via the $\frac{1}{2^n}$ and the right choice for the $a_n$ sequences) so that the Besov sequence norms are finite. The critical point is to ensure that the $a_n$ are constructed such that we control the Besov sequence norms for any $p > 0$, the values $p \rightarrow 0$ being the more challenging ones. 
It is worth noting that our proof is \textit{constructive}, in the sense that the $a_n$ and therefore $a$ are concrete Besov sequences.

\section{Critical Smoothness and Compressibility}
\label{sec:compressibility}

The Besov regularity of a function is intimately linked to its best approximation in wavelet bases~\cite{Devore1998Nterm}. We revisit this fact by connecting the \textit{wavelet compressibility} of a generalized function $f$, introduced in~\cite[Definition 13]{Fageot2017nterm} (see Definition \ref{def:compress} below), to its critical smoothness function. 

    \subsection{Compressibility in Wavelet Bases}
    
    In this section, we fix $0<p_0 \leq \infty$ and $s_0 \in \R$. 
    We only consider Besov spaces with $q=p$, that we denote by $B_p^s(\T) = B_{p,p}^s(\T)$ thereafter.
    
    Assume that we have a orthonormal basis $(\psi_{\lambda})_{\lambda \in \Lambda}$ of $L_2(\T)$, where $\Lambda$ is a infinite set of indices. When dealing with functions in $B_{p_0}^{s_0}(\T)$, we will always consider that the wavelet bases is $(p_0,s_0)$-admissible in the sense of Definition~\ref{def:admissible}.
    
    For any $f \in \SpT$, we have  that $f = \sum_{\lambda \in \N} \langle f , \psi_{\lambda} \rangle \psi_{\lambda}$, where the convergence holds in $\SpT$. 
   For $f \in \SpT$ and $\Lambda_N \subset \Lambda$ such that $|\Lambda_N| = N$, we define $\Sigma_{\Lambda_N}(f) = \sum_{\lambda \in \Lambda_N} \langle f , \psi_{\lambda} \rangle \psi_{\lambda}$. 
   
   \begin{proposition} \label{prop:trucdeconv}
  Let $0<p_0\leq \infty$, $s_0\in \R$, and  $f \in \SpT$ be such that $s_f(p_0) > s_0$. Then, $f \in B_{p_0}^{s_0}(\T)$ and the convergence $f = \sum_{\lambda \in \N} \langle f , \psi_{\lambda} \rangle \psi_{\lambda}$ holds in $B_{p_0}^{s_0}(\T)$. 
   \end{proposition}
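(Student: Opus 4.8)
The first claim requires no work: since $s_0 < s_f(p_0)$, the discussion following Definition~\ref{def:critical} (itself a consequence of the embeddings \eqref{eq:embedsmooth}) gives $f \in B_{p_0}^{s_0}(\T)$ immediately. All the content is in the norm convergence, and the plan is to transport everything to the wavelet coefficient side, where the decisive mechanism — a strictly positive smoothness gap — becomes transparent. First I would fix a smoothness $s$ with $s_0 < s < s_f(p_0)$; such an $s$ exists precisely because $s_f(p_0) > s_0$, and it guarantees $f \in B_{p_0}^{s}(\T)$. Writing $a_{j,k} = 2^{j/2}\langle f, \psi_{j,k}\rangle$ for the coefficient sequence and organizing the index set $\Lambda$ by scale $j \geq 0$, the admissibility of the basis and the equivalence \eqref{eq:equivalencefanda} yield $a \in b_{p_0}^{s}$. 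Since for any finite $\Lambda_N \subset \Lambda$ the remainder $f - \Sigma_{\Lambda_N}(f)$ has coefficient sequence exactly equal to the restriction of $a$ to $\Lambda \setminus \Lambda_N$, the norm equivalence reduces the problem to showing that $\lVert a\,\mathbbm{1}_{\Lambda\setminus\Lambda_N} \rVert_{b_{p_0}^{s_0}}$ can be made arbitrarily small.

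The estimate I would run splits the scales at a threshold $J$. For the high scales $j \geq J$, reading the weights off \eqref{eq:sequencenorm} shows that the $b_{p_0}^{s_0}$ weight and the $b_{p_0}^{s}$ weight at scale $j$ differ by the factor $2^{-j(s-s_0)p_0}$ (with the obvious supremum version when $p_0 = \infty$, where the factor is simply $2^{-j(s-s_0)}$). As $s - s_0 > 0$, this factor is at most $2^{-J(s-s_0)p_0}$ on $j \geq J$, so comparing against the finite quantity $\lVert a\rVert_{b_{p_0}^{s}}$ gives, whenever $\Lambda_N$ contains every index of scale $j < J$,
\begin{equation}
\bigl\lVert a\,\mathbbm{1}_{\Lambda\setminus\Lambda_N} \bigr\rVert_{b_{p_0}^{s_0}} \le 2^{-J(s-s_0)}\,\lVert a\rVert_{b_{p_0}^{s}}.
\end{equation}
The low scales $j < J$ are harmless because $\#X_j \leq C\,2^j$ by \eqref{eq:XjC}, so they comprise only finitely many indices; once $\Lambda_N$ contains all of them, the restricted sequence $a\,\mathbbm{1}_{\Lambda\setminus\Lambda_N}$ is supported entirely on $j \geq J$ and the displayed bound applies.

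Assembling this, given $\varepsilon > 0$ I would choose $J$ so that $2^{-J(s-s_0)}\lVert a\rVert_{b_{p_0}^{s}} < \varepsilon$, take $\Lambda_0$ to be the finite set of all indices of scale $j < J$, and conclude that every finite $\Lambda_N \supseteq \Lambda_0$ satisfies $\lVert a\,\mathbbm{1}_{\Lambda\setminus\Lambda_N}\rVert_{b_{p_0}^{s_0}} < \varepsilon$; transferring back through \eqref{eq:equivalencefanda} yields $\lVert f - \Sigma_{\Lambda_N}(f)\rVert_{B_{p_0}^{s_0}} \to 0$. The step I expect to demand the most care is the case $p_0 = \infty$ (and, more generally, any regime where finitely supported sequences are not dense in $b_{p_0}^{s_0}$): there one cannot obtain convergence by a density argument, and it is exactly the strict gap $s > s_0$ that rescues the proof, forcing the uniform geometric decay $2^{-j(s-s_0)}$ across scales that makes the high-scale tail vanish. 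The remaining norm comparisons and the bookkeeping of the scale split are routine.
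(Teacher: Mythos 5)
Your proof is correct, and it is substantially more complete than the one in the paper. The paper's own argument is a two-line appeal to admissibility: it notes $f\in B_{p_0}^{s_0}(\T)$ and then simply asserts that, the basis being $(p_0,s_0)$-admissible, the scale-$J$ truncations converge to $f$ in $B_{p_0}^{s_0}(\T)$. You take the same overall route (transfer to the sequence space $b_{p_0}^{s_0}$ via \eqref{eq:equivalencefanda}, truncate by scale), but you actually supply the mechanism the paper leaves implicit: picking $s$ with $s_0<s<s_f(p_0)$ and extracting the geometric factor $2^{-J(s-s_0)}$ from the ratio of the weights in \eqref{eq:sequencenorm}. This buys two things. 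First, your argument genuinely uses the hypothesis $s_f(p_0)>s_0$ for the convergence itself, not merely for membership; the paper's phrasing suggests that membership in $B_{p_0}^{s_0}(\T)$ alone suffices, which is false when $p_0=\infty$ (finitely supported sequences are not dense in $b_{\infty}^{s_0}$), and you correctly flag this as the point where the strict gap is indispensable. Second, you obtain the stronger, unconditional form of the statement (smallness of the tail for \emph{every} finite $\Lambda_N$ containing the low scales, using that the $b_{p_0}^{s_0}$ quasi-norm is monotone under restriction of the support), which is exactly what is needed later for the best $N$-term error in Corollary~\ref{coro:conv}. One small point to keep in mind: your identification of the coefficient sequence of $f-\Sigma_{\Lambda_N}(f)$ with $a\,\mathbbm{1}_{\Lambda\setminus\Lambda_N}$ relies on the system being an orthonormal basis rather than a redundant Parseval frame; this is consistent with the standing assumption of Section~\ref{sec:compressibility}, but would need an extra argument in the general frame setting.
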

   
   \begin{proof}
   We have $f \in B_{p_0}^{s_0}(\T)$ by definition of $s_f(p_0)>s_0$. 
   The wavelet basis is $(p_0,s_0)$-admissible. This means that the wavelet approximation of $f$ until the scale $J \geq 1$ goes to $f$ in $B_{p_0}^{s_0}(\T)$ when $J \rightarrow \infty$, which is precisely a reformulation of Proposition~\ref{prop:trucdeconv}.
   \end{proof} 
   
\begin{definition}\label{de:bestnterm}
 Let $0<p_0\leq \infty$, $s_0\in \R$, and  $f \in \SpT$ be such that $s_f(p_0) > s_0$.
We define the \emph{best $N$-term approximation} of $f$ as 
   \begin{equation}
       \Sigma^{p_0,s_0}_N (f) = \Sigma_{\Lambda^{p_0,s_0}_N(f)} (f) = \sum_{\lambda \in \Lambda^{p_0,s_0}_N(f)} \langle f ,\psi_{\lambda} \rangle \psi_{\lambda},
   \end{equation}
   where $\Lambda^{p_0,s_0}_N(f)$ is such that\footnote{In general, the set $\Lambda^{p_0,s_0}_N(f)$ is not unique, but choosing different optimal sets will have no impact on the sequel.}
   \begin{equation}
       \Lambda^{p_0,s_0}_N(f) \in \underset{|\Lambda_N| = N}{\arg \min} \ \lVert f - \sigma_{\Lambda_N} (f) \rVert_{B_{p_0}^{s_0}}.
   \end{equation}
     The \emph{best $N$-term approximation error} is then
  \begin{equation}
      \sigma_N^{p_0,s_0}(f) = \lVert f - \Sigma_N^{p_0,s_0}(f) \rVert_{B_{p_0}^{s_0}}. 
  \end{equation}
  \end{definition}
  
\begin{corollary} \label{coro:conv}
     Let $0<p_0\leq \infty$, $s_0\in \R$, and  $f \in \SpT$ be such that $s_f(p_0) > s_0$. Then, $ \sigma_N^{p_0,s_0}(f) \rightarrow 0$ when $N \rightarrow \infty$. 
\end{corollary}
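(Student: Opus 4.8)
The plan is to read off the corollary directly from Proposition~\ref{prop:trucdeconv}. Since $s_f(p_0) > s_0$, that proposition gives $f \in B_{p_0}^{s_0}(\T)$ together with the convergence of the scale-truncated partial sums
\[
   P_J := \sum_{j \leq J}\ \sum_{k \in X_j} \langle f, \psi_{\lambda} \rangle\, \psi_{\lambda} \longrightarrow f \quad \text{in } B_{p_0}^{s_0}(\T),
\]
so that $\varepsilon_J := \lVert f - P_J \rVert_{B_{p_0}^{s_0}} \to 0$. As $P_J$ is one particular $N_J$-term approximation, with $N_J := \sum_{j \leq J} \#X_j$ terms, the very definition of the best $N$-term error as a minimum yields $\sigma_{N_J}^{p_0,s_0}(f) \leq \varepsilon_J \to 0$. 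Thus $\sigma_N^{p_0,s_0}(f)$ already tends to $0$ along the subsequence $N = N_J$, and it remains only to interpolate between these special values of $N$.

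The single step I expect to require care is establishing that $N \mapsto \sigma_N^{p_0,s_0}(f)$ is non-increasing, which, combined with the subsequential limit, forces the whole sequence to $0$. Monotonicity is \emph{not} obvious in the function (quasi-)norm: enlarging $\Lambda_N$ by one index $\mu$ appends the fixed term $\langle f, \psi_\mu\rangle \psi_\mu$ to the reconstruction, and there is no a priori reason for $\lVert f - \Sigma_{\Lambda_N}(f)\rVert_{B_{p_0}^{s_0}}$ to decrease under this operation. I would circumvent this by passing to the wavelet coefficient sequence $a = (a_{j,k})$ of $f$ via the $(0,\infty)$-admissibility of the fixed frame, so that $\lVert f - \Sigma_{\Lambda_N}(f)\rVert_{B_{p_0}^{s_0}} \asymp \lVert a \cdot \mathbf{1}_{\Lambda \setminus \Lambda_N} \rVert_{b_{p_0}^{s_0}}$ by \eqref{eq:equivalencefanda}. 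Since here $q = p_0$, the sequence norm \eqref{eq:sequencenorm} is a genuine weighted $\ell_{p_0}$-norm over the whole index set, for which zeroing out one more coefficient can only decrease the norm; hence the sequence-space best $N$-term error $\tilde\sigma_N := \min_{|\Lambda_N|=N}\lVert a \cdot \mathbf{1}_{\Lambda\setminus\Lambda_N}\rVert_{b_{p_0}^{s_0}}$ is manifestly non-increasing, and $\sigma_N^{p_0,s_0}(f) \leq C\,\tilde\sigma_N$ follows by choosing $\Lambda_N$ to be the sequence-optimal set.

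It then suffices to check $\tilde\sigma_N \to 0$, which closes the argument. For $p_0 < \infty$ this is immediate: $\tilde\sigma_N^{\,p_0}$ is the sum of all but the $N$ largest terms of the convergent series $\sum_{j,k} 2^{j(s_0 - 1/p_0)p_0}|a_{j,k}|^{p_0} = \lVert a\rVert_{b_{p_0}^{s_0}}^{p_0} < \infty$, hence the tail vanishes. The case $p_0 = \infty$ is where I would use the \emph{strict} inequality $s_f(\infty) > s_0$: picking $s_0 < s' < s_f(\infty)$ gives $\sup_{j,k} 2^{js'}|a_{j,k}| < \infty$, so the $s_0$-weighted entries satisfy $2^{js_0}|a_{j,k}| \lesssim 2^{j(s_0 - s')} \to 0$ as $j \to \infty$, with finitely many indices per scale; thus the weighted coefficients tend to $0$, and the best $N$-term supremum error (the $(N+1)$-st largest entry) tends to $0$ as well. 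In all cases $\sigma_N^{p_0,s_0}(f) \leq C\,\tilde\sigma_N \to 0$, as claimed.
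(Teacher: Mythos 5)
Your argument is correct, and it starts exactly where the paper does: Proposition~\ref{prop:trucdeconv} gives convergence of the scale-truncated partial sums in $B_{p_0}^{s_0}(\T)$, and the best $N$-term error is dominated by the error of any particular $N$-term approximant. The difference is that the paper stops there (``which directly implies Corollary~\ref{coro:conv}''), silently passing from decay along the special values $N_J=\sum_{j\leq J}\#X_j$ to decay of the full sequence $\sigma_N^{p_0,s_0}(f)$. You correctly flag that this step is not free: $N\mapsto\sigma_N^{p_0,s_0}(f)$ is not obviously non-increasing in the function (quasi-)norm, since appending one more wavelet term to the reconstruction need not shrink $\lVert f-\Sigma_{\Lambda_N}(f)\rVert_{B_{p_0}^{s_0}}$. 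Your repair --- transferring the problem to the sequence norm $b_{p_0}^{s_0}$ via the admissibility equivalence, where with $q=p_0$ the error is a restricted weighted $\ell_{p_0}$-norm that is manifestly monotone under enlarging the retained index set, and then showing the sequence-space error tends to $0$ (tail of a convergent series for $p_0<\infty$; decay of the $(N{+}1)$-st largest weighted coefficient for $p_0=\infty$, using the strict inequality $s_f(\infty)>s_0$) --- is sound and self-contained; note that once you have $\tilde\sigma_N\to 0$ directly, your opening subsequence argument along $N_J$ becomes redundant. In short: same idea as the paper, but you supply a genuine missing detail, at the cost of invoking the sequence-space machinery that the paper's one-line proof avoids mentioning.
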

   
   \begin{proof}
  According to Proposition~\ref{prop:trucdeconv}, we have the convergence of the series  $f = \sum_{\lambda \in \N} \langle f , \psi_{\lambda} \rangle \psi_{\lambda}$ in $B_{p_0}^{s_0}(\T)$, which directly implies Corollary \ref{coro:conv}.
   \end{proof}
   
    The speed of decay of $\sigma_N^{p_0,s_0}(f)$ towards $0$ measures the compressibility, in the Besov space $B_{p_0}^{s_0}(\T)$, of $f$. We are typically interested by polynomial decays, \emph{i.e.}, when $\sigma_N^{p_0,s_0}(f)$ roughly speaking  behaves as $1 / N^{\kappa}$ for some $\kappa > 0$. This leads to the following definition. 

    \begin{definition} \label{def:compress}
    Let $f \in \SpT$ such that $s_f(p_0) > s_0$. The {$(p_0,s_0)$-compressibility} is given by 
    \begin{equation}
        \kappa^{p_0,s_0} (f) = \sup \left\{ \kappa \geq 0, \quad  \frac{\sigma_N^{p_0,s_0}(f)}{N^\kappa} \underset{N\rightarrow \infty}{\longrightarrow} 0 \right\}.
    \end{equation}
    \end{definition}
    The compressibility $\kappa^{p_0,s_0} (f)$ is well-defined (as the supremum is taken over a non-empty set that contains $\kappa = 0$ due to Corollary~\ref{coro:conv}) and takes value in $[0, \infty]$. The case $ \kappa^{p_0,s_0} (f) = \infty$ means that the approximation error decay is super-polynomial, the function $f$ hence being highly compressible. 
    Using known results on Besov spaces and approximation theory, especially the link between Besov sequence spaces and non-linear approximation~\cite{Garrigos2004sharp}, we can deduce the following result. 
    
    \begin{proposition} \label{prop:giveskappa}
    Let $f \in \SpT$, $0 < p_0 \leq \infty$, and $s_0\in \R$ be such that $s_{p_0}(f) > s_0$. Then, the two following scenarios hold. 
   
   (i) Assume that there exists $0 < p(f)\leq p_0$ such that,
    \begin{align}
    f  \in B_{p  }^{\frac{1}{p} - \frac{1}{p_0}+ s_0 }(\T) , \quad &\forall p > p(f), \text{ and }  \label{eq:fin}\\
     f \notin B_{p}^{\frac{1}{p} - \frac{1}{p_0} + s_0}(\T) , \quad &\forall p < p(f). \label{eq:fnotin}
    \end{align}
    Then, the value $p(f)$ is unique and we have
    \begin{equation} \label{eq:kappafini}
        \kappa^{p_0,s_0}(f) = \frac{1}{p(f)} - \frac{1}{p_0}.
    \end{equation}
    
    (ii) Assume that, for any $p \geq p_0$, $f \in B_{p}^{\frac{1}{p}- \frac{1}{p_0} + s_0}(\T)$. Then, 
    \begin{equation} \label{eq:kappainf}
        \kappa^{p_0,s_0}(f) =  \infty.
    \end{equation}
    \end{proposition}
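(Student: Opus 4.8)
The plan is to transport the whole problem to the sequence side and reduce it to the classical theory of best $N$-term approximation in $\ell_{p_0}$, governed by Stechkin's inequality and its converse (this is exactly the content of~\cite{Garrigos2004sharp}). Since $s_f(p_0)>s_0$ gives $f\in B_{p_0}^{s_0}(\T)$ and the basis is $(p_0,s_0)$-admissible, writing $a_{j,k}=2^{j/2}\langle f,\psi_{j,k}\rangle$ as in \eqref{eq:relationsequencefunction} yields $\lVert g\rVert_{B_{p_0}^{s_0}}\asymp \lVert a(g)\rVert_{b_{p_0}^{s_0}}$ for every $g\in\SpT$, with constants independent of $g$. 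Introducing the renormalized coefficients $d_{j,k}=2^{j(s_0-1/p_0)}a_{j,k}$, the sequence norm \eqref{eq:sequencenorm} (with $q=p_0$) collapses to a plain $\ell_{p_0}$-norm, $\lVert a\rVert_{b_{p_0}^{s_0}}=\lVert d\rVert_{\ell_{p_0}}$, so in particular $d\in\ell_{p_0}$. Applying the admissible equivalence to $g=f-\Sigma_{\Lambda_N}(f)$, whose coefficient sequence is $d$ restricted to the complement of $\Lambda_N$, and using that a uniform two-sided estimate is preserved under minimizing over all index sets of size $N$, I would conclude that $\sigma_N^{p_0,s_0}(f)\asymp \tilde\sigma_N(d)$, where $\tilde\sigma_N(d)=(\sum_{n>N}(d_n^\ast)^{p_0})^{1/p_0}$ is the error of keeping the $N$ largest entries of $d$ and $(d_n^\ast)$ is the decreasing rearrangement of $(|d_{j,k}|)$. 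Hence $\kappa^{p_0,s_0}(f)$ equals the critical polynomial decay exponent of $\tilde\sigma_N(d)$, i.e. the supremum of $\kappa$ with $N^{\kappa}\tilde\sigma_N(d)\to 0$.

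Next I would record the dictionary between $d$ and the Besov scale along the slope-one line through $(1/p_0,s_0)$. Matching $\lVert d\rVert_{\ell_p}^p=\sum_{j,k}2^{j(s_0-1/p_0)p}|a_{j,k}|^p$ with the $b_p^{s_1}$-norm of $a$ forces $s_1=\frac1p-\frac1{p_0}+s_0$, so by \eqref{eq:equivalencefanda},
\[
 d\in\ell_p \iff f\in B_p^{\frac1p-\frac1{p_0}+s_0}(\T).
\]
The two halves of Stechkin's inequality then read: \emph{direct}, if $d\in\ell_p$ with $p<p_0$ then $\tilde\sigma_N(d)\le C\,N^{-(1/p-1/p_0)}\lVert d\rVert_{\ell_p}$, since monotonicity gives $d_N^\ast\le N^{-1/p}\lVert d\rVert_{\ell_p}$ and hence $\tilde\sigma_N(d)^{p_0}\le (d_N^\ast)^{p_0-p}\lVert d\rVert_{\ell_p}^p$; \emph{converse}, if $\tilde\sigma_N(d)\lesssim N^{-\kappa}$ then $N(d_{2N}^\ast)^{p_0}\le\tilde\sigma_N(d)^{p_0}$ yields $d_n^\ast\lesssim n^{-(1/p_0+\kappa)}$, so $d$ lies in weak-$\ell_\tau$ with $1/\tau=1/p_0+\kappa$ and therefore $d\in\ell_p$ for every $p$ with $1/p<1/p_0+\kappa$.

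With this in hand I would assemble case (i). The direct estimate, applied for each $p>p(f)$ (where $d\in\ell_p$ by \eqref{eq:fin}), gives $N^{\kappa}\tilde\sigma_N(d)\to 0$ for every $\kappa<1/p-1/p_0$; letting $p\downarrow p(f)$ yields $\kappa^{p_0,s_0}(f)\ge 1/p(f)-1/p_0$. For the reverse inequality, suppose $\kappa^{p_0,s_0}(f)>1/p(f)-1/p_0$ and pick $\kappa$ strictly between these two numbers; then $\tilde\sigma_N(d)\lesssim N^{-\kappa}$, so by the converse estimate $d\in\ell_p$ whenever $1/p<1/p_0+\kappa$. Since $1/p_0+\kappa>1/p(f)$, one can choose such a $p$ with $p<p(f)$, giving $f\in B_p^{1/p-1/p_0+s_0}(\T)$ and contradicting \eqref{eq:fnotin}. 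Hence $\kappa^{p_0,s_0}(f)=1/p(f)-1/p_0$, which is \eqref{eq:kappafini}. Uniqueness of $p(f)$ is immediate: if $p_1<p_2$ both satisfied \eqref{eq:fin}--\eqref{eq:fnotin}, any $p\in(p_1,p_2)$ would have to both belong to and be excluded from $B_p^{1/p-1/p_0+s_0}(\T)$.

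Finally, case (ii) is the degenerate limit $p(f)=0$: the hypothesis means $d\in\ell_p$ for arbitrarily small $p>0$, so the direct estimate gives $\kappa^{p_0,s_0}(f)\ge 1/p-1/p_0$ for every such $p$, whence $\kappa^{p_0,s_0}(f)=\infty$, i.e. \eqref{eq:kappainf}. I expect the only delicate point to be the converse half of Stechkin's inequality, together with the $o(\cdot)$-versus-$O(\cdot)$ bookkeeping needed to make the supremum defining $\kappa^{p_0,s_0}(f)$ match the threshold $1/p(f)-1/p_0$ exactly; the strict inequalities in \eqref{eq:fin}--\eqref{eq:fnotin} and the weak-$\ell_\tau$ embedding are precisely what force the two one-sided bounds to meet.
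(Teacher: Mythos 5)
Your argument is correct and rests on the same underlying mathematics as the paper's, but it is genuinely more self-contained: the paper disposes of both cases in three lines by citing Theorem~3 and Eqs.~(47)--(48) of~\cite{Fageot2017nterm} (which in turn lean on~\cite{Garrigos2004sharp}), i.e.\ it imports the Jackson/Bernstein-type characterization of nonlinear wavelet approximation as a black box, whereas you reprove the needed two-sided Stechkin estimates from scratch after renormalizing the coefficients to $d_{j,k}=2^{j(s_0-1/p_0)}a_{j,k}$, so that $\lVert\cdot\rVert_{b_{p_0}^{s_0}}$ collapses to a plain $\ell_{p_0}$-norm and the slope-one Besov line $s=\frac1p-\frac1{p_0}+s_0$ becomes exactly the $\ell_p$ scale. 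Both halves of your Stechkin argument check out (the direct bound via $d_N^\ast\le N^{-1/p}\lVert d\rVert_{\ell_p}$, the converse via $N(d_{2N}^\ast)^{p_0}\le\tilde\sigma_N(d)^{p_0}$ and the weak-$\ell_\tau$ embedding), and the $o(\cdot)$-versus-$O(\cdot)$ bookkeeping is handled correctly by always passing to strictly smaller exponents before taking suprema. What your version buys is transparency plus an explicit proof of the uniqueness of $p(f)$, which the paper asserts without comment. Three small points to tidy: (a) your identification $\sigma_N^{p_0,s_0}(f)\asymp\tilde\sigma_N(d)$ uses that the coefficient sequence of $f-\Sigma_{\Lambda_N}(f)$ is $d$ restricted to the complement of $\Lambda_N$, which holds because Section~\ref{sec:compressibility} works with an orthonormal basis but would require care for a redundant Parseval frame; (b) the case $p_0=\infty$ needs the usual reinterpretation $\tilde\sigma_N(d)=d_{N+1}^\ast$ in place of the $p_0$-th power sums; (c) in case~(ii) you read the hypothesis as membership in $B_p^{\frac1p-\frac1{p_0}+s_0}(\T)$ for arbitrarily small $p$ — this is the intended reading, consistent with the figure and with the paper's own proof (``true for any $p>0$''), but the statement as printed says $p\ge p_0$, which taken literally would yield only nonpositive decay rates; you have silently corrected a typo rather than introduced a gap.
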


    \textit{Remark.} Proposition \ref{prop:giveskappa} formalizes the fundamental idea that the $(p_0,s_0)$-compressibility is captured by the belonging of $f$ into Besov spaces of the form $B_{p}^{\frac{1}{p} - \frac{1}{p_0}+ s_0} (\T)$. Figure \ref{fig:criticallinenterm} gives a visual interpretation of Proposition \ref{prop:giveskappa} on $(1/p,s)$-diagrams.

\begin{figure}[h!]  
\centering 
\begin{subfigure}[b]{0.40\textwidth}
\begin{tikzpicture}[x=2cm,y=2cm,scale=0.34]

\draw[thick,, ->] (0,-0.5)--(3,-0.5) node[circle,right] {$\frac{1}{p}$} ;
\draw[thick, ->] (0,-3/2)--(0,2) node[circle,above] {$s$} ;
\draw[thick, color=green] (1,-0.5)--(2.2,0.7) node[circle,above] {};
\draw[thick, color=red, ->] (2.2,0.7)--(3,1.5) node[circle,above] {};

\draw[dashed, color=black] (2.2,-0.5)--(2.2,0.7) node[circle,above] {};

\draw[ thick,color=black]
(-0.05,-0.5) -- (0.05,-0.5)  node[black,left] { $s_0$};

\draw[ thick,color=black]
(1,-0.55) -- (1,-0.45)  node[black,below] { $\frac{1}{p_0}$};
\draw[ thick,color=black]
(2.2,-0.55) -- (2.2,-0.45)  node[black,below] { $\frac{1}{p(f)}$};

\end{tikzpicture}
\end{subfigure}
\begin{subfigure}[b]{0.40\textwidth}
\begin{tikzpicture}[x=2cm,y=2cm,scale=0.34]

\draw[thick,, ->] (0,-0.5)--(3,-0.5) node[circle,right] {$\frac{1}{p}$} ;
\draw[thick, ->] (0,-3/2)--(0,2) node[circle,above] {$s$} ;
\draw[thick, color=green,->] (1,-0.5)--(3,1.5) node[circle,above] {};

\draw[ thick,color=black]
(-0.05,-0.5) -- (0.05,-0.5)  node[black,left] { $s_0$};

\draw[ thick,color=black]
(1,-0.55) -- (1,-0.45)  node[black,below] { $\frac{1}{p_0}$};

\end{tikzpicture}
\end{subfigure}

\caption{$(1/p,s)$-diagram with a special emphasis on the line $\frac{1}{p} \mapsto \frac{1}{p} - \frac{1}{p_0} + s_0$. Green color means that we are in the corresponding Besov spaces, while the red color means that we are not.
On the left, we have that $\kappa^{p_0,s_0}(f) = \frac{1}{p(f)} - \frac{1}{p_0}$. On the right, we have that $\kappa^{p_0,s_0}(f) = \infty$.}
\label{fig:giveskappa}
\end{figure}
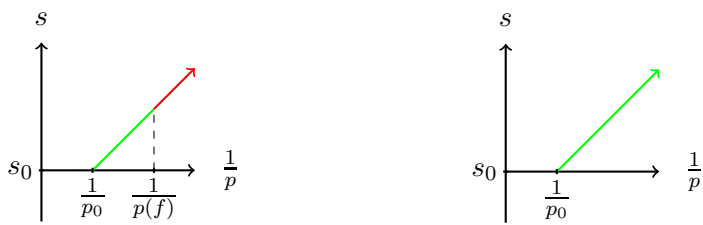

    The result is not new but it has never been stated in that way, that appears to be useful in practice, especially for the connection with the critical smoothness function (see below). 

    \begin{proof}[Proof of Proposition \ref{prop:giveskappa}]
    (i) This is a reformulation of~\cite[Theorem 3]{Fageot2017nterm} (which uses crucially~\cite{Garrigos2004sharp}). Indeed, the relation \eqref{eq:fin} implies, due to \cite[Eq. (47)]{Fageot2017nterm}, 
    that 
    \begin{equation}
    \sigma_n^{p_0,s_0}(f) \leq \frac{C(f)}{n^{\frac{1}{p} - \frac{1}{p_0}} }    
    \end{equation}
    for some constant $C(f)$ independent from $n \geq 1$,
    and therefore  $\kappa^{p_0,s_0}(f) \geq \frac{1}{p} - \frac{1}{p_0}$. This is true for any $p > p(f)$, hence $\kappa^{p_0,s_0}(f) \geq \frac{1}{p(f)} - \frac{1}{p_0}$.
        On the other hand, the relation \eqref{eq:fnotin} implies that~\cite[Eq. (48)]{Fageot2017nterm} does not hold, and therefore $\kappa^{p_0,s_0}(f) \leq \frac{1}{p} - \frac{1}{p_0}$ for every $p < p(f)$. Hence, $\kappa^{p_0,s_0}(f) \leq \frac{1}{p(f)} - \frac{1}{p_0}$. Finally, we have shown \eqref{eq:kappafini}. 

    (ii) As for (i), the relation $f \in B_{p}^{\frac{1}{p}- \frac{1}{p_0} + s_0}(\T)$ implies that $\kappa^{p_0,s_0}(f) \geq \frac{1}{p} -\frac{1}{p_0}$. This being true for any $p > 0$, we deduce that $\kappa^{p_0,s_0}(f) = \infty$

    \end{proof}
    
    \subsection{Compressibility and Critical Smoothness Function}
    
   The function $\frac{1}{p} \mapsto s_f(p)$ is right differentiable, $1$-Lipschitz, and concave. We can therefore define 
   \begin{equation} \label{eq:deftau0}
       \alpha_0(f) = \lim_{p\rightarrow 0} \partial_+ s_f (p),
   \end{equation}
   where we recall that $ \partial_+ s_f (p)$ is the right derivative of $\frac{1}{p} \mapsto s_f(p)$ in $\frac{1}{p}$. The limit \eqref{eq:deftau0} is well-defined because $ \partial_+ s_f (p)$ is bounded below and decreases when $p\rightarrow 0$ and we have $\alpha_0(f) \in [0,1]$.
   
   \begin{theorem} \label{theo:compress}
 Let $0<p_0\leq \infty$, $s_0\in \R$, and  $f \in \SpT$ be such that $s_f(p_0) > s_0$.  
     \begin{itemize}[label=\raisebox{0.30ex}{\tiny$\bullet$}]
       \item Assume that $\alpha_0(f) = 1$, then, for any $p > 0$, $s_f(p) = \frac{1}{p} + s_f(\infty)$ and we have
       \begin{equation}
           \kappa^{p_0,s_0}(f) = \infty. 
       \end{equation}
       
       \item Assume that $\alpha_0(f) < 1$. Then, there exists a unique $p(f)< p_0$ such that $s_f(p(f)) =   \left( \frac{1}{p(f)} - \frac{1}{p_0} \right) + s_0$ and we have 
         \begin{equation} \label{eq:kqppqp0soforpfexisting}
           \kappa^{p_0,s_0}(f) = \frac{1}{p(f)} - \frac{1}{p_0} < \infty.
       \end{equation}
       
        \item If there exists $0 < \tilde{p}(f) < \infty$ such that $\partial_-s_f(\tilde{p} (f))= 1$ and $\partial_+s_f(\tilde{p}(f))= 0$, then 
        \begin{equation}
            \kappa^{p_0,s_0}(f)  = s_f(\tilde{p}(f)) - s_0 = \lim_{p\rightarrow 0} s_f(p) - s_0. 
        \end{equation}
        Moreover, the same conclusion holds if $\partial_+ s_f ( \infty ) = 0$, remarking moreover that $s_f(p) = s_f(\infty)$ for any $0 < p \leq \infty$. 
        \end{itemize} 
   \end{theorem}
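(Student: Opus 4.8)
The plan is to route everything through Proposition~\ref{prop:giveskappa}, which already expresses $\kappa^{p_0,s_0}(f)$ via the membership of $f$ in the Besov scale $B_p^{\frac1p-\frac1{p_0}+s_0}(\T)$ attached to the line of slope~$1$ through $(\frac1{p_0},s_0)$ in the $(1/p,s)$-diagram. The first step is a dictionary between this membership and the critical smoothness function: with $\ell(p)=\frac1p-\frac1{p_0}+s_0$, the embeddings \eqref{eq:embedsmooth} give $s_f(p)>\ell(p)\Rightarrow f\in B_p^{\ell(p)}(\T)$ and $s_f(p)<\ell(p)\Rightarrow f\notin B_p^{\ell(p)}(\T)$, so the only value of $p$ where membership is ambiguous is where $s_f(p)=\ell(p)$. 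Hence checking the hypotheses \eqref{eq:fin}--\eqref{eq:fnotin} (resp. the hypothesis of case~(ii)) of Proposition~\ref{prop:giveskappa} reduces to locating where the concave, $1$-Lipschitz curve $\frac1p\mapsto s_f(p)$ meets the affine line $\frac1p\mapsto\frac1p-\frac1{p_0}+s_0$ of slope exactly~$1$.

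To this end I study the gap $g(p)=s_f(p)-\ell(p)=s_f(p)-\frac1p+\frac1{p_0}-s_0$ through the map $\frac1p\mapsto g(p)$ on $[0,\infty)$. It is concave, continuous, and non-increasing, since its right derivative is $\partial_+ s_f-1\le 0$; moreover $g(p_0)=s_f(p_0)-s_0>0$, so $g(\infty)\ge g(p_0)>0$. For the first bullet, $\alpha_0(f)=1$ together with concavity forces $\partial_+ s_f\ge\alpha_0(f)=1$ everywhere, while $1$-Lipschitzness forces $\partial_+ s_f\le 1$; the slope is thus identically~$1$, giving $s_f(p)=\frac1p+s_f(\infty)$ and $g\equiv g(\infty)>0$. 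Then $f\in B_p^{\ell(p)}(\T)$ for every $p$, and Proposition~\ref{prop:giveskappa}(ii) yields $\kappa^{p_0,s_0}(f)=\infty$.

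For the second bullet, $\alpha_0(f)<1$ makes the right derivative of $g$ tend to $\alpha_0(f)-1<0$, so $g$ eventually decreases at a rate bounded away from~$0$ and $g\to-\infty$; with $g(\infty)>0$ this forces a zero. The one delicate point is uniqueness: a segment on which $g$ is flat at value~$0$ would mean $s_f$ has slope~$1$ there, which by concavity propagates back to an initial segment $\frac1p\in[0,b]$ on which $g\equiv g(\infty)>0$, a contradiction. Hence every zero lies where $s_f$ has slope $<1$ and $g$ is strictly decreasing, yielding a unique $p(f)$; since $g(p_0)>0$ the zero satisfies $\frac1{p(f)}>\frac1{p_0}$, i.e. $p(f)<p_0$, and the sign of $g$ on either side of it is exactly \eqref{eq:fin}--\eqref{eq:fnotin}. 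Proposition~\ref{prop:giveskappa}(i) then gives $\kappa^{p_0,s_0}(f)=\frac1{p(f)}-\frac1{p_0}<\infty$.

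The third bullet specializes the second: the kink hypotheses $\partial_- s_f(\tilde p(f))=1$, $\partial_+ s_f(\tilde p(f))=0$ trigger relation \eqref{eq:only01slopes} of Proposition~\ref{prop:simplepropsf}, so $s_f(p)=s_f(\infty)+\frac1{\max(p,\tilde p(f))}$ and $\alpha_0(f)=0<1$. On the slope-$0$ branch $\frac1p\ge\frac1{\tilde p(f)}$ one has $s_f(p)=s_f(\tilde p(f))$ constant, so $g(p(f))=0$ solves as $\frac1{p(f)}=s_f(\tilde p(f))+\frac1{p_0}-s_0$, whence $\kappa^{p_0,s_0}(f)=\frac1{p(f)}-\frac1{p_0}=s_f(\tilde p(f))-s_0$; I would confirm $\frac1{p(f)}\ge\frac1{\tilde p(f)}$ (equivalent to $g(\infty)>0$) so that the zero sits on this branch, and note $s_f(\tilde p(f))=\lim_{p\to0}s_f(p)$. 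The ``moreover'' case $\partial_+ s_f(\infty)=0$ forces slope~$0$ everywhere, so $s_f\equiv s_f(\infty)$ and the same computation gives $\kappa^{p_0,s_0}(f)=s_f(\infty)-s_0=\lim_{p\to0}s_f(p)-s_0$. The genuinely subtle parts are the strict-inequality dictionary (which confines the ambiguous borderline membership to the single crossing point) and the uniqueness of that crossing via concavity; the remainder is bookkeeping on the $(1/p,s)$-diagram.
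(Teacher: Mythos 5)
Your proposal is correct and follows essentially the same route as the paper: both reduce the theorem to Proposition~\ref{prop:giveskappa} and locate the crossing of the concave, $1$-Lipschitz curve $\frac{1}{p}\mapsto s_f(p)$ with the slope-one line $\frac{1}{p}\mapsto\frac{1}{p}-\frac{1}{p_0}+s_0$; your gap function $g$ is exactly the paper's auxiliary function $h(u)=s_f(1/u)-(u-1/p_0+s_0)$, and the uniqueness-of-crossing and third-bullet arguments coincide. If anything, your treatment of uniqueness (ruling out a flat zero segment via propagation of slope~$1$ back to $1/p=0$) is spelled out slightly more carefully than in the paper.
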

   
   \begin{proof}
   Assume first that $\alpha_0(f) = 1$. The concavity of $\frac{1}{p} \mapsto s_f(p)$ implies that, for any $0 < p \leq q \leq \infty$, 
   \begin{equation}
   1 \geq \frac{s_p(f) - s_q(f)}{\frac{1}{p}- \frac{1}{q}} \geq \alpha_0(f) = 1. 
   \end{equation}
   Hence, ${s_p(f) - s_q(f)} = {\frac{1}{p}- \frac{1}{q}}$ for any $p,q$. Taking $q = \infty$ gives that $s_f(p) = \frac{1}{p} + s_f(\infty)$, as expected. 
   
   We have  Moreover that $s_f(p_0) = \frac{1}{p_0} + s_f(\infty) > s_0$. Hence, $s_f(p) > \frac{1}{p} - \frac{1}{p_0} + s_0$, implying that $f \in B_p^{\frac{1}{p}  - \frac{1}{p_0} + s_0}(\T)$ for any $p < p_0$. 
   We therefore have $\kappa^{p_0,s_0}(f) = \infty$ according to the case (ii) in Proposition \ref{prop:giveskappa}. \\

    Assume that $\alpha_0(f) < 1$. Consider the function $h(u) = s_f\left( \frac{1}{u} \right) - \left( u - \frac{1}{p_0} + s_0 \right)$, where the variable $u$ plays the role of $\frac{1}{p}$. Then, $h$ is concave as a sum of a concave and a linear function, and such that $h(p_0^{-1}) > 0$ and $h(u) \rightarrow \infty$ when $u\rightarrow \infty$. Then, $h$ being continuous, there exists $u_0$ such that $h(u_0)=0$. Then, $h(u_0
   ^- ) > 0$ and $h(u_0^+) < 0$ since the left and right derivative of $u$ cannot vanish in $u_0$. This shows that $u_0$ is unique. Setting $p(f) = \frac{1}{u_0}$, we have $s_f(p(f)) = s \left( \frac{1}{p(f)} - \frac{1}{p_0} \right) + s_0$, $p(f)$ being unique because $u_0$ is. Moreover, we have that $\partial_+s_f(p(f)) \leq \partial_- s_f(p(f)) < 1$ (otherwise, we would have that $s_f(p(f)) = s_0 + \frac{1}{p(f)} - \frac{1}{p_0}$, what is excluded). We deduce from $\partial_- s_f(p(f))$ that $s_f(p) > \frac{1}{p}- \frac{1}{p_0} + s_0$ for any $p > p(f)$, hence $f \notin B
  ^{\frac{1}{p} - \frac{1}{p_0} +s_0}_p(\T)$. Similarly, $\partial_+s_f(p(f)) < 1$ implies that $s_f(p) < \frac{1}{p}- \frac{1}{p_0} + s_0$ for any $p < p(f)$, hence $f \in B
  ^{\frac{1}{p} - \frac{1}{p_0} +s_0}_p(\T)$. We are therefore in the situation of Proposition \ref{prop:giveskappa} (i), which implies \eqref{eq:kqppqp0soforpfexisting}.  \\
   
   For the last part, the proposed condition implies that $s_f(p)$ satisfies \eqref{eq:only01slopes} for the critical value $\tilde{p}(f)$. 
   In particular, the unique point for which $s_f(p(f)) = \left( \frac{1}{p(f)} - \frac{1}{p_0} + s_0\right) + s_0$ is such that $s_f(p(f)) =s_f(\tilde{p}(f))$. Moreover, we have that $s_f( p(f) ) = s_f (\tilde{p}(f) ) = \left(\frac{1}{p(f)} - \frac{1}{p_0} \right) + s_0$ and therefore, due to \eqref{eq:kqppqp0soforpfexisting},
   \begin{equation}
       \kappa^{p_0,s_0}(f)  = \frac{1}{p(f)} - \frac{1}{p_0} = s_f(\tilde{p}(f)) - s_0.
   \end{equation}
   Finally, we observe that $s_f(p) = s_f(\tilde{p})$ for any $p < \tilde{p}$, hence $\lim_{p\rightarrow 0} s_f(p) = s_f(\tilde{p})$.  
   \end{proof}
   
   \textit{Remark.} Theorem \ref{theo:compress} provides a simple way to deduce the value of $\kappa^{p_0,s_0}(f)$ from the knowledge of the critical smoothness function: it suffices to find the intersection point of $\frac{1}{p} \mapsto s_f(p)$ and  $\frac{1}{p} \mapsto \frac{1}{p} - \frac{1}{p_0} + s_0$. This is illustrated in the $(1/p,s)$-diagram of Figure~\ref{fig:cavamarcher} with critical smoothness functions of the form $\frac{1}{p} = \frac{\alpha_0}{p} + s_0$.

\begin{figure}[h!]  
\centering 
\begin{subfigure}[b]{0.40\textwidth}
\begin{tikzpicture}[x=2cm,y=2cm,scale=0.34]

\draw[thick,, ->] (0,-0.5)--(3,-0.5) node[circle,right] {$\frac{1}{p}$} ;
\draw[thick, ->] (0,-3/2)--(0,2) node[circle,above] {$s$} ;
\draw[thick, color=blue, ->] (1,-0.5)--(3,1.5) node[circle,above] {};
\draw[thick, color=orange, ->] (0,-0.4)--(3,1.1) node[circle,above] {};

\draw[dashed, color=black] (2.2,-0.5)--(2.2,0.7) node[circle,above] {};

\draw[ thick,color=black]
(-0.05,-0.5) -- (0.05,-0.5)  node[black,left] { $s_0$};

\draw[ thick,color=black]
(1,-0.55) -- (1,-0.45)  node[black,below] { $\frac{1}{p_0}$};
\draw[ thick,color=black]
(2.2,-0.55) -- (2.2,-0.45)  node[black,below] { $\frac{1}{p(f)}$};

\end{tikzpicture}
\end{subfigure}
\begin{subfigure}[b]{0.40\textwidth}
\begin{tikzpicture}[x=2cm,y=2cm,scale=0.34]

\draw[thick,, ->] (0,-0.5)--(3,-0.5) node[circle,right] {$\frac{1}{p}$} ;
\draw[thick, ->] (0,-3/2)--(0,2) node[circle,above] {$s$} ;
\draw[thick, color=blue, ->] (1,-0.5)--(3,1.5) node[circle,above] {};
\draw[thick, color=orange, ->] (0,-0.8)--(3,2.2) node[circle,above] {};

\draw[ thick,color=black]
(-0.05,-0.5) -- (0.05,-0.5)  node[black,left] { $s_0$};

\draw[ thick,color=black]
(1,-0.55) -- (1,-0.45)  node[black,below] { $\frac{1}{p_0}$};

\end{tikzpicture}
\end{subfigure}

\caption{$(1/p,s)$-diagram to determine the wavelet compressibility of $f$. The blue curves have equation $\frac{1}{p}\mapsto \frac{1}{p}-\frac{1}{p_0} + s_0$. The orange curves are the critical smoothness function $\frac{1}{p}\mapsto s_f(p)$ of some functions $f$. On the left, we have that $\kappa^{p_0,s_0}(f) = \frac{1}{p(f)} - \frac{1}{p_0}$. On the right, we have that $\kappa^{p_0,s_0}(f) = \infty$.}
\label{fig:cavamarcher}
\end{figure}
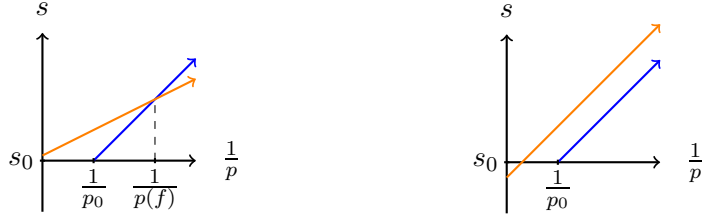

    \section{Discussions and Conclusion}
    \label{sec:conclusion}

    \subsection{Critical Smoothness over Other Function Spaces}\label{sec:beyondbesov}
    
    We defined the critical smoothness of a function with respect to Besov spaces $B_{p,q}^s(\T)$, and we have seen that the critical smoothness is independent from the $q$ parameter (see Proposition~\ref{prop:removeq}). 
    One could consider other function spaces that also depends on a integrability parameter $p > 0$ and a smoothness parameter $s \in \R$, such as Sobolev spaces $W_p^s(\T)$, Bessel-potential spaces $H_p^s(\T)$, or more generally Triebel--Lizorkin spaces $F_{p,q}^s(\T)$~\cite{schmeisser87}.

    Those different spaces are, from the point of view of the critical smoothness, equivalent, in the sense that they specify the same critical smoothness function for any generalized function. In order to make it more precise, let use define the Triebel--Lizorkin critical smoothness associated to $0< p, q \leq \infty$ by
    \begin{equation} \label{eq:cs}
        \tilde{s}_f(p,q) = \sup \{ s \in \R, \ f \in F_{p,q}^s(\T) \} \in (-\infty, \infty].
    \end{equation}
    The quantity $ \tilde{s}_f(p,q) $ is well-defined for the exact same reason than the Besov critical smoothness $ s_f(p,q) $ was (see Section \ref{sec:defspf}). Then, we have the following fact.
    
    \begin{proposition}
    Let $f \in \SpT$, then for any $0< p, q\leq \infty$, we have that
    \begin{equation} \label{eq:TLvsB}
        \tilde{s}_f(p,q) = s_f(p,q) =s_f(p). 
    \end{equation}
    \end{proposition}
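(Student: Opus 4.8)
The plan is to reduce the equality \eqref{eq:TLvsB} to the $q$-independence already established for the Besov scale in Proposition~\ref{prop:removeq}, by sandwiching each Triebel--Lizorkin space between two Besov spaces carrying the \emph{same} integrability and smoothness parameters. Since $s_f(p,q) = s_f(p)$ is exactly the content of Proposition~\ref{prop:removeq}, the only thing left to prove is $\tilde{s}_f(p,q) = s_f(p)$.

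First I would invoke the classical elementary embeddings between Besov and Triebel--Lizorkin spaces (see~\cite{schmeisser87}), which for $p < \infty$, every $s \in \R$, and $0 < q \leq \infty$ read
\begin{equation} \label{eq:BFsandwich}
    B_{p,\min(p,q)}^s(\T) \subseteq F_{p,q}^s(\T) \subseteq B_{p,\max(p,q)}^s(\T).
\end{equation}
The crucial feature of \eqref{eq:BFsandwich} is that these inclusions lose nothing in the smoothness parameter $s$; they only shift the fine index $q$, which is precisely the index that Proposition~\ref{prop:removeq} shows to be irrelevant for the critical smoothness.

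The two inequalities then follow immediately. For the upper bound, if $f \in F_{p,q}^s(\T)$ then the right inclusion in \eqref{eq:BFsandwich} gives $f \in B_{p,\max(p,q)}^s(\T)$, hence $s \leq s_f(p,\max(p,q)) = s_f(p)$ by Proposition~\ref{prop:removeq}; taking the supremum over all such $s$ yields $\tilde{s}_f(p,q) \leq s_f(p)$. For the lower bound, whenever $f \in B_{p,\min(p,q)}^s(\T)$ the left inclusion gives $f \in F_{p,q}^s(\T)$, so $\tilde{s}_f(p,q) \geq s$; taking the supremum over such $s$ gives $\tilde{s}_f(p,q) \geq s_f(p,\min(p,q)) = s_f(p)$, again by Proposition~\ref{prop:removeq}. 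Combining the two bounds establishes $\tilde{s}_f(p,q) = s_f(p)$ and hence \eqref{eq:TLvsB}.

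The only genuine obstacle is the endpoint $p = \infty$, where the Triebel--Lizorkin scale requires the usual modification and \eqref{eq:BFsandwich} is stated for $p < \infty$. I would handle this uniformly in $p \in (0,\infty]$ by replacing \eqref{eq:BFsandwich} with its $\epsilon$-loss version, mirroring the proof of Proposition~\ref{prop:removeq}: for any $\epsilon > 0$ and any fine indices one has $B_{p,q_0}^{s+\epsilon}(\T) \subseteq F_{p,q}^s(\T) \subseteq B_{p,q_1}^{s-\epsilon}(\T)$, which combined with the $\epsilon$-loss Besov embeddings of Proposition~\ref{prop:embeddings} (with $p_0 = p_1 = p$) pinches $\tilde{s}_f(p,q)$ between $s_f(p) - \epsilon$ and $s_f(p) + \epsilon$; letting $\epsilon \to 0$ closes the argument without any case distinction at the endpoint.
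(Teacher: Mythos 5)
Your argument is essentially identical to the paper's: both sandwich $F_{p,q}^s(\T)$ between $B_{p,\min(p,q)}^s(\T)$ and $B_{p,\max(p,q)}^s(\T)$ and then invoke the $q$-independence of Proposition~\ref{prop:removeq} to pinch $\tilde{s}_f(p,q)$ at $s_f(p)$. The one genuine difference is that the paper's proof opens with ``assume first that $p<\infty$'' and never returns to the endpoint, whereas your $\epsilon$-loss variant explicitly covers $p=\infty$; that is a worthwhile addition, provided the embeddings you use there are available for whatever definition of $F_{\infty,q}^s(\T)$ is adopted.
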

    
    \begin{proof}
    Assume first that $p < \infty$. 
    According to~\cite[Proposition 2, p. 47]{schmeisser87}, we have the topological embeddings 
    \begin{equation}
        B_{p,\min(p,q)}^s (\T) \subseteq  F_{p,q}^s (\T) \subseteq  B_{p,\max(p,q)}^s (\T) .
    \end{equation}
    This implies that $s_{p, \max(p,q)}(f) \leq \tilde{s}_f(p,q) \leq s_{p, \min(p,q)}(f)$, Moreover, we know from Proposition \ref{prop:removeq} that $s_{p, \max(p,q)}(f) = s_{p, \min(p,q)}(f) = s_p(f)$, which gives \eqref{eq:TLvsB}. 
    \end{proof}
    
    In other terms, the critical smoothness of generalized functions depends on the integrability parameter $p>0$, and not on the type of function spaces (Besov, Sobolev, Bessel-potential, Triebel--Lizorkin, etc.) we consider. This means in particular that one can choose the best function spaces to characterize the critical smoothness of a generalized function. 

    \subsection{Critical Smoothness in the Multivariate Case}

    We introduced the critical smoothness function of $1$-dimensional periodic functions defined over $\T$. Again, this choice was made for the sake of simplicity (especially regarding the wavelet formalism) but there is no conceptual difficulty for higher dimension generalizations. One can consider multivariate functions $f \in \mathcal{S}'(\T^d)$ over the $d$-dimensional torus with $d\geq 1$. Theorem \ref{theo:main} is then generalized as follows (the extension of the definitions to the multivariate case are left to the reader). 
    
    \begin{proposition}
          Let $f \in \mathcal{S}'(\T^d)  \backslash \mathcal{S}(\T^d) $. Then,
    the function $\frac{1}{p} \mapsto s_f(p)$ is an increasing, concave, and $d$-Lipschitz function from $[0,\infty)$ to $\R$.
    
    Moreover, for any function $s : (0,\infty] \rightarrow \R$ such that $\frac{1}{p}\mapsto s(p)$ is increasing, concave, and $d$-Lipschitz, there exists a periodic generalized function $f \in \mathcal{S}'(\T^d)  \backslash \mathcal{S}(\T^d)$ such that $s_f(p) = s(p)$.
    \end{proposition}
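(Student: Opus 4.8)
The plan is to adapt the two-part proof of Theorem~\ref{theo:main} to the $d$-dimensional setting, tracking carefully where the ambient dimension enters. The whole argument rests on three ingredients, each of which has a standard $d$-dimensional analogue: the Sobolev-type embeddings of Proposition~\ref{prop:embeddings}, the complex-interpolation embedding of Proposition~\ref{prop:interpol}, and the existence of a $(0,\infty)$-admissible Parseval frame of $L_2(\T^d)$ together with the isometric identification~\eqref{eq:equivalencefanda} between $B_{p,q}^s(\T^d)$ and its sequence space. The last ingredient is furnished, for instance, by tensor products of the univariate $(0,\infty)$-admissible frames already used in the paper, and the only structural change in the sequence norm~\eqref{eq:sequencenorm} is that the exponent $\frac{1}{p}$ becomes $\frac{d}{p}$, reflecting that each dyadic scale $j$ now carries $\#X_j \asymp 2^{jd}$ wavelets.

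For the first (properties) part, I would reproduce the proof of Proposition~\ref{prop:sfp} essentially line by line. Monotonicity and concavity are dimension-agnostic: monotonicity uses only the trivial embedding $B_{p_1}^s(\T^d) \subseteq B_{p_0}^{s-\epsilon}(\T^d)$ for $p_0 \leq p_1$, and concavity uses the interpolation relation $\frac{1}{p} = \frac{\lambda}{p_0} + \frac{1-\lambda}{p_1}$, $s = \lambda s_0 + (1-\lambda)s_1$, whose parameter bookkeeping is identical in every dimension. The single place where $d$ surfaces is the Lipschitz estimate: the $d$-dimensional Sobolev embedding reads $B_{p_1}^{s}(\T^d) \subseteq B_{p_2}^{s - d(\frac{1}{p_1} - \frac{1}{p_2}) - \epsilon}(\T^d)$ for $p_1 \leq p_2$, so the bound $0 \leq \frac{s_f(p_1) - s_f(p_2)}{1/p_1 - 1/p_2} \leq 1$ of the univariate case becomes $0 \leq \frac{s_f(p_1) - s_f(p_2)}{1/p_1 - 1/p_2} \leq d$, that is, $\frac{1}{p}\mapsto s_f(p)$ is $d$-Lipschitz.

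For the second (construction) part, the key is to upgrade Proposition~\ref{prop:sfplinear}: I would show that for every slope $\beta \in [0,d]$ there is a generalized function with $s_f(p) = s_0 + \frac{\beta}{p}$. Writing $\beta = d\alpha_0$ with $\alpha_0 = 1 - \frac{1}{K} \in [0,1]$ and repeating the wavelet construction---at scale $j$, activate $2^{\lfloor jd/K\rfloor}$ of the $\asymp 2^{jd}$ coefficients (setting them to one)---the sequence-norm computation yields $\lVert a \rVert_{b_p^s}^p \asymp \sum_{j\geq 0} 2^{j(sp - d + d/K)}$, which is finite exactly when $s < \frac{d}{p}(1 - \frac{1}{K}) = \frac{\beta}{p}$. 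Thus every affine profile with slope in $[0,d]$ is realizable, which is precisely the range forced by $d$-Lipschitzness. With this enlarged family of building blocks in hand, the proof of Proposition~\ref{prop:theomain2} transfers verbatim: any increasing, concave, $d$-Lipschitz function of $\frac{1}{p}$ is the infimum of its affine tangents, all of whose slopes lie in $[0,d]$, and the normalized superposition $a = \sum_{n\geq 1} 2^{-n} a_n$ over a dense family of such tangents realizes $s_a(p) = \inf_{n} s_{a_n}(p) = s(p)$. The summability estimates controlling $\lVert a_n \rVert_{b_p^s}$ as $p \to 0$ are unchanged, since the dimension only rescales the slopes.

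I expect no genuine obstacle, only bookkeeping: the one substantive point to verify is that the $d$ appearing on the embedding side (the Sobolev exponent $d(\frac{1}{p_0} - \frac{1}{p_1})$) is the \emph{same} $d$ appearing on the construction side (the $\frac{d}{p}$ in the sequence norm and the $\asymp 2^{jd}$ wavelets per scale), so that the two halves of the theorem---the $d$-Lipschitz necessary condition and the realizability of all slopes up to $d$---match exactly. Confirming the existence and $(0,\infty)$-admissibility of a suitable multivariate frame (for instance the tensorized Meyer or Narcowich--Ward frames) is the only external input, and it is available in the references already cited.
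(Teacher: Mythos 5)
Your proposal is correct and follows exactly the route the paper intends: the paper gives no separate proof of the multivariate proposition, only the remark that the sole difference is the $d$-Lipschitzness coming from the multivariate embedding relations, with everything else carrying over from the one-dimensional argument. Your adaptation---$d$-scaled Sobolev embeddings for the Lipschitz bound, dimension-agnostic monotonicity and concavity, $2^{jd}$ wavelets per scale yielding affine profiles of every slope in $[0,d]$, and the unchanged superposition argument---is precisely the bookkeeping the authors leave to the reader.
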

    
    The key difference is the $d$-Lipschitzness. This is a classic fact that can be seen in embedding relations between multivariate Besov spaces~\cite[Section 2.3.3]{Edmunds2008function}. For instance, the critical smoothness function of a $d$-dimensional Dirac comb $\Sha = \sum_{\bm{k}\in \Z^d} \delta(\cdot - \bm{k} )$ is $s_{\Sha} (p) = d - \frac{d}{p}$ for every $0<p \leq \infty$~\cite[Proposition 5]{aziznejad2018wavelet}. 
    
    \subsection{Conclusion}
    
    In this paper, we characterized the possible evolution of the critical smoothness $s_f(p)$ of generalized functions $f \in \SpT$ for $0<p \leq \infty$. We have shown that the class of critical smoothness functions $\frac{1}{p} \mapsto s_f(p)$ coincides with the class of functions $\frac{1}{p}\mapsto s(p)$ that are increasing, concave, and $1$-Lipschitz over $[0,\infty)$ for functions $f \notin\ST$.
    We moreover obtained several interesting properties of the critical smoothness functions, with a special emphasis on the characterization of the compressibility of generalized functions in wavelet domain via their critical smoothness functions.
    We hope that the critical smoothness function will be a useful tool for characterizing the smoothness properties of deterministic and random (generalized) functions.

\section*{Acknowledgments}

The authors are grateful to Felix Hummel for interesting discussions regarding interpolation theory.
Julien Fageot was supported by the Swiss National Science Foundation (SNSF) under Grant P2ELP2\_181759. 
\bibliographystyle{plain}
\bibliography{references}

\end{document}